\newtheorem{thm}{Theorem}[section]
\newtheorem{lemma}[thm]{Lemma}
\newtheorem{prop}[thm]{Proposition}
\newtheorem{cor}[thm]{Corollary}
\newtheorem{fact}[thm]{Fact}
\theoremstyle{definition}
\newtheorem{df}[thm]{Definition}
\newtheorem{nrmk}[thm]{Remark}
\newtheorem{facts}[thm]{Facts}
\theoremstyle{remark}
\renewcommand{\r}{\mathbb{R}}
\newcommand{\Z}{\mathbb{Z}}
\newcommand{\N}{\mathfrak{N}}
\newcommand{\n}{\mathbb{N}}
\renewcommand{\to}{\rightarrow}
\newcommand\concat{\widehat{\phantom{\eta}}}
\def \balpha{\boldsymbol\alpha}
\def \bbeta{\boldsymbol\beta}
\def \<{\langle}
\def \>{\rangle}
\def \*Z {{{^*}\Z}}
\def \((  {(\!(}
\def \)) {)\!)}
\def \m{\operatorname{M}}
\numberwithin{equation}{section}
\def \u{\mathcal U}
\def \m{\mathcal{M}}
\def \good{\operatorname{good}}
\def\indsym#1#2{%
  \setbox0=\hbox{$\m@th#1x$}%
  \kern\wd0%
  \hbox to 0pt{\hss$\m@th#1\mid$\hbox to 0pt{$\m@th#1^{#2}$}\hss}%
  \lower.9\ht0\hbox to 0pt{\hss$\m@th#1\smile$\hss}%
  \kern\wd0}
\def\dotminussym#1#2{%
  \setbox0=\hbox{$\m@th#1-$}%
  \kern.5\wd0%
  \hbox to 0pt{\hss\hbox{$\m@th#1-$}\hss}%
  \raise.6\ht0\hbox to 0pt{\hss$\m@th#1.$\hss}%
  \kern.5\wd0}
\newcommand{\dotminus}{\mathbin{\mathpalette\dotminussym{}}}
\def\nindsym#1#2{%
  \setbox0=\hbox{$\m@th#1x$}%
  \kern\wd0%
  \hbox to 0pt{\hss$\m@th#1\not$\kern1.4\wd0\hss}
  \hbox to 0pt{\hss$\m@th#1\mid$\hbox to 0pt{$\m@th#1^{\,#2}$}\hss}%
  \lower.9\ht0\hbox to 0pt{\hss$\m@th#1\smile$\hss}%
  \kern\wd0}
\newcommand{\cstar}{$\mathrm{C}^*$}
\title{Explicit sentences distinguishing McDuff's II$_1$ factors}
\author{Isaac Goldbring, Bradd Hart, and Henry Towsner}
\thanks{Goldbring's work was partially supported by NSF CAREER grant DMS-1349399.}
\address {Department of Mathematics, University of California, Irvine, 340 Rowland Hall (Bldg.\# 400), Irvine, CA, 92697-3875.}
\email{isaac@math.uci.edu}
\urladdr{http://www.math.uci.edu/~isaac}
\address{Department of Mathematics and Statistics, McMaster University, 1280 Main St., Hamilton ON, Canada L8S 4K1}
\email{hartb@mcmaster.ca}
\urladdr{http://ms.mcmaster.ca/~bradd/}
\thanks{Hart's work was partially supported by NSERC}
\address {Department of Mathematics, University of Pennsylvania, 209 South 33rd Street 
Philadelphia, PA 19104-6395.}
\email{htowsner@math.upenn.edu}
\urladdr{http://www.sas.upenn.edu/~htowsner/}
\thanks{Towsner's work was partially supported by NSF grant DMS-1600263}
\begin{document}

\begin{abstract}
Recently, Boutonnet, Chifan, and Ioana proved that McDuff's examples of continuum many pairwise non-isomorphic separable II$_1$ factors are in fact pairwise non-elementarily equivalent.  Their proof proceeded by showing that any ultrapowers of any two distinct McDuff examples are not isomorphic.  In a paper by the first two authors of this paper, Ehrenfeucht-Fra\"isse games were used to find an upper bound on the quantifier complexity of sentences distinguishing the McDuff examples, leaving it as an open question to find concrete sentences distinguishing the McDuff factors.  In this paper, we answer this question by providing such concrete sentences.
\end{abstract}

\maketitle

\section{Introduction}

The first examples of continuum many nonisomorphic separable II$_1$ factors were given by McDuff in \cite{MD2}.  These same examples were shown to be non-elementarily equivalent (in the sense of continuous logic) by Boutonnet, Chifan, and Ioana in \cite{BCI}.  The way they proved that the McDuff factors were not elementarily equivalent was by showing, for any two distinct McDuff examples $\m$ and $\mathcal{N}$ and any two ultrafilters $\u$ and $\mathcal{V}$ on $\n$, that the ultrapowers $\m^\u$ and $\mathcal{N}^{\mathcal{V}}$ were not isomorphic; by standard model-theoretic results, it then follows that $\m$ and $\mathcal{N}$ are not elementarily equivalent.

In \cite{braddisaac}, the techniques in \cite{BCI} were dissected in order to give some information about the sentences distinguishing the McDuff examples.  Indeed, if we enumerate the McDuff examples by $\m_{\balpha}$ for $\balpha\in 2^\omega$ and $k\in \omega$ is the least digit such that $\balpha(k)\not=\bbeta(k)$, then it was shown that there must be a sentence $\theta$ with at most $5k+3$ alternations of quantifiers such that $\theta^{\m_{\balpha}}\not=\theta^{\m_{\bbeta}}$.  The proof there used Ehrenfeucht-Fra\"isse games.  The game-theoretic techniques also hinted at a possible strategy of providing concrete sentences distinguishing the McDuff examples \emph{if} concrete sentences distinguishing examples that differed at the first digit could be obtained.  In \cite[Section 4.1]{braddisaac}, such sentences were obtained, but they lacked the uniformity needed to carry out the strategy outlined there.

In this paper, an even finer analysis of the work in \cite{BCI} is carried out in order to obtain concrete sentences that distinguish McDuff examples that differ at the first digit; this analysis appears in Section 3.  In Section 4, the details of the plan outlined in \cite[Section 4.2]{braddisaac} are given and the inductive construction of sentences distinguishing all of the McDuff examples is elucidated.  We note that the concrete sentences given here that distinguish examples at ``level'' $k$ also have $5k+3$ alternations of quantifiers, agreeing with the game-theoretic bounds predicted in \cite{braddisaac}.

We list here some conventions used throughout the paper.  First, we follow set theoretic notation and view $k\in \omega$ as the set of natural numbers less than $k$:  $k=\{0,1\ldots,k-1\}$.  In particular, $2^k$ denotes the set of functions $\{0,1,\ldots,k-1\}\to \{0,1\}$.  If $\balpha\in 2^k$, then we set $\alpha_i:=\balpha(i)$ for $i=0,1,\ldots,k-1$ and we let $\balpha^\#\in 2^{k-1}$ be such that $\balpha$ is the concatenation of $(\alpha_0)$ and $\balpha^\#$.  If $\balpha\in 2^\omega$, then $\balpha|k$ denotes the restriction of $\balpha$ to $\{0,1,\ldots,k-1\}$.  

Whenever we write a tuple $\vec x$, it will be understood that the length of the tuple is countable (that is, finite or countably infinite).

We will use uppercase letters to denote variables in formulae while their lowercase counterparts will be elements from algebras.  We will use $U$'s and $V$'s (sometimes with subscripts) for variables ranging over the set of unitaries; since unitaries are quantifier-free definable relative to the theory of \cstar-algebras, this convention is harmless.  Of course, we will then use $u$'s and $v$'s for unitaries from specific algebras.

%We use $\subset$ (as opposed to $\subseteq$) to denote proper inclusion of sets.

Given a group $\Gamma$ and $a\in \Gamma$, we let $u_a\in L(\Gamma)$ be the canonical unitary associated to $a$.

Fix a von Neumann algebra $\m$.  For $x,y\in \m$, the commutator of $x$ and $y$ is the element $[x,y]:=xy-yx$.   If $A$ is a subalgebra of $\m$, then the relative commutant of $A$ in $\m$ is the set $$A'\cap \m:=\{x\in \m \ | \ [x,a]=0 \text{ for all } a\in A\}.$$In particular, the center of $\m$ is $Z(\m):=\m'\cap \m$.  For a tuple $\vec a$ from $\m$, we write $C(\vec a)$ to denote $A'\cap \m$, where $A$ is the subalgebra of $\m$ generated by the coordinates of $\vec a$.  Technically, this notation should also mention $\m$, but the ambient algebra will always be clear from context, whence we omit any mention of it in the notation.

\section{Preliminaries}

In this section, we gather most of the background material needed in the rest of the paper.  First, we recall McDuff's examples.  Let $\Gamma$ be a countable group.  For $i\geq 1$, let $\Gamma_i$ denote an isomorphic copy of $\Gamma$ and let $\Lambda_i$ denote an isomorphic copy of $\mathbb{Z}$.  Let $\tilde{\Gamma}:=\bigoplus_{i\geq 1}\Gamma_i$.  If $S_\infty$ denotes the group of permutations of $\N$ with finite support, then there is a natural action of $S_\infty$ on $\bigoplus_{i\geq 1} \Gamma$ (given by permutation of indices), whence we may consider the semidirect product $\tilde{\Gamma}\rtimes S_\infty$.  Given these conventions, we can now define two new groups:

$$T_0(\Gamma):=\langle \tilde{\Gamma}, (\Lambda_i)_{i\geq 1} \ | \ [\Gamma_i,\Lambda_j]=0 \text{ for }i\geq j\rangle$$ and

$$T_1(\Gamma):=\langle \tilde{\Gamma}\rtimes S_\infty, (\Lambda_i)_{i\geq 1} \ | \ [\Gamma_i,\Lambda_j]=0 \text{ for }i\geq j\rangle.$$

Note that if $\Delta$ is a subgroup of $\Gamma$ and $\alpha\in \{0,1\}$, then $T_\alpha(\Delta)$ is a subgroup of $T_\alpha(\Gamma)$.  Given a sequence $\balpha\in 2^{\leq \omega}$, we define a group $K_{\balpha}(\Gamma)$ as follows:
\begin{enumerate}
\item $K_{\balpha}(\Gamma):=\Gamma$ if $\balpha=\emptyset$;
\item $K_{\balpha}(\Gamma):=(T_{\alpha_0}\circ T_{\alpha_1}\circ \cdots T_{\alpha_{n-1}})(\Gamma)$ if $\balpha\in 2^n$;
\item $K_{\balpha}$ is the inductive limit of $(K_{\balpha|n})_n$ if $\balpha\in 2^\omega$.
\end{enumerate}

We then set $\m_{\balpha}(\Gamma):=L(T_{\balpha}(\Gamma))$.  When $\Gamma=\mathbb{F}_2$, we simply write $\m_{\balpha}$ instead of $\m_{\balpha}(\mathbb{F}_2)$; these are the McDuff examples referred to the introduction.

Given $n\geq 1$, we let $\tilde{\Gamma}_{\balpha,n}$ denote the subgroup of $T_{\alpha_0}(K_{\balpha^\#}(\Gamma))$ given by the direct sum of the copies of $K_{\balpha^\#}(\Gamma)$ indexed by those $i\geq n$ and we let $P_{\balpha,n}:=L(\tilde{\Gamma}_{\balpha,n})$.  When $\balpha$ has length $1$, we simply refer to $\tilde{\Gamma}_{\emptyset,n}$ as $\tilde{\Gamma}_n$ and $P_{\emptyset,n}$ as $P_n$; if, in addition, $n=1$, then we simply refer to $\tilde{\Gamma}_1$ as $\tilde{\Gamma}$.  As introduced in \cite{braddisaac}, we define a \emph{generalized McDuff ultraproduct corresponding to $\balpha$ and $\Gamma$} to be an ultraproduct of the form $\prod_\u \m_{\balpha}(\Gamma)^{\otimes t_s}$, where $(t_s)$ is a sequence of natural numbers and $\u$ is a nonprincipal ultrafilter on $\n$.
% and we refer to subalgebras of the form $\prod_\u P_{\balpha,n_s}^{\otimes t_s}$ as \emph{special}.
%Given $\alpha\in 2^{\omega}$, we have an ascending sequence of countable groups
%$$\Gamma\subseteq T_{\alpha(0)}(\Gamma)\subseteq T_{\alpha(0)}(T_{\alpha(1)}(\Gamma))\subseteq T_{\alpha(0)}(T_{\alpha(1)}(T_{\alpha(2)}(\Gamma)))\subseteq \cdots$$ whose inductive limit we call $T_\alpha(\Gamma)$.  Finally, we set $\m_\alpha(\Gamma):=L(T_\alpha(\Gamma))$, the group von Neumann algebra of $T_\alpha(\Gamma)$.  The McDuff examples referred to in the introduction are $\m_\alpha:=\m_\alpha(\mathbb{F}_2)$.

The following definition, implicit in \cite{BCI} and made explicitly in \cite{braddisaac}, is central to our work in this paper.

\begin{df}
We say that a pair of unitaries $u,v $ in a II$_1$ factor $\m$ are {\em good unitaries} if, for all $\zeta \in \m$,
\[ 
\inf_{\eta\in C(u,v)}\|\zeta -\eta\|_2^2 \leq 100 (\|[\zeta,u]\|_2^2 + \|[\zeta,v]\|_2^2). 
\]
\end{df}
In the terminology of \cite{BCI}, this says that $C(u,v)$ is a (2,100)-residual subalgebra of $\m$.

We will need the following key facts, whose proofs are outlined in \cite[Facts 2.6]{braddisaac}.

\begin{facts}\label{key.fact}
Suppose that $\balpha\in 2^{<\omega}$ is nonempty, $\Gamma$ is a countable group, and $(t_s)$ is a sequence of natural numbers.
\begin{enumerate}
\item Suppose that $(m_s)$ and $(n_s)$ are two sequences of natural numbers such that $n_s<m_s$ for all $s$.  Further suppose that $\Gamma$ is an ICC group.  Then $(\prod_\u P_{\balpha,m_s}^{\otimes t_s})'\cap (\prod_\u P_{\balpha,n_s}^{\otimes t_s})$ is a generalized McDuff ultraproduct corresponding to $\balpha^\#$ and $\Gamma$.
\item For any sequence $(n_s)$, there is a pair of good unitaries $\vec u$ from $\prod_\u \m_{\balpha}(\Gamma)^{\otimes t_s}$ such that $\prod_\u P_{\balpha,n_s}^{\otimes t_s}=C(\vec u)$.
\item Given any separable subalgebra $A$ of $\prod_\u \m_{\balpha}(\Gamma)^{\otimes t_s}$, there is a sequence $(n_s)$ such that $\prod_\u P_{\balpha,n_s}^{\otimes t_s}\subset A'\cap \prod_\u \m_{\balpha}(\Gamma)^{\otimes t_s}$.
\end{enumerate}
\end{facts}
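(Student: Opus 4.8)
The plan is to prove the three assertions in the order (2), (1), (3): assertion (1) will rely on (2), while (3) is essentially independent of the other two. The common technical input is the spectral-gap (``residuality'') estimate underlying the notion of good unitaries; everything else is a group-theoretic centralizer computation in the McDuff groups together with routine ultraproduct bookkeeping. I expect the residuality estimate --- and, for (1), its transfer from the factors to the ultraproduct --- to be the main obstacle; the rest is essentially formal.

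For (2), I would first reduce to the factor level: for each $s$ there is a pair $\vec u_s$ of good unitaries in $\m_\balpha(\Gamma)^{\otimes t_s}$, with the \emph{same} constant $100$, such that $C(\vec u_s)=P_{\balpha,n_s}^{\otimes t_s}$. Granting this, set $\vec u:=(\vec u_s)_\u$. Since the conditional expectation of $\prod_\u\m_\balpha(\Gamma)^{\otimes t_s}$ onto $\prod_\u P_{\balpha,n_s}^{\otimes t_s}$ is the ultraproduct of the expectations $E_{P_{\balpha,n_s}^{\otimes t_s}}$, the defining inequality for good unitaries passes to the $\u$-limit, so $\vec u$ is good; the inclusion $\prod_\u P_{\balpha,n_s}^{\otimes t_s}\subseteq C(\vec u)$ is immediate, and conversely if $\zeta=(\zeta_s)_\u$ commutes with $\vec u$ then $\|[\zeta_s,u_{i,s}]\|_2\to 0$ along $\u$ for $i=1,2$, whence $\inf_{\eta\in P_{\balpha,n_s}^{\otimes t_s}}\|\zeta_s-\eta\|_2^2\le 100\sum_i\|[\zeta_s,u_{i,s}]\|_2^2\to 0$ and $\zeta\in\prod_\u P_{\balpha,n_s}^{\otimes t_s}$. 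At the factor level I would write down $\vec u_s$ explicitly from the canonical unitaries of $K_\balpha(\Gamma)$: one unitary built (over the $t_s$ tensor coordinates) from a generator of $\Lambda_{n_s}$ and one from a generator of the $(n_s-1)$-st copy of $K_{\balpha^\#}(\Gamma)$, with the edge cases $n_s=1$ and $\alpha_0=1$ (where the $S_\infty$-part must also be absorbed) handled separately. The identity $C(\vec u_s)=P_{\balpha,n_s}^{\otimes t_s}=L(\tilde\Gamma_{\balpha,n_s})^{\otimes t_s}$ is then a group-theoretic centralizer computation --- using that $\Lambda_j$ centralizes the $i$-th copy of $K_{\balpha^\#}(\Gamma)$ precisely when $i\ge j$, plus ICC-ness of $\Gamma$ (hence of $K_{\balpha^\#}(\Gamma)$) to deal with centralizers inside a single copy. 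The genuinely hard point is the residuality inequality with the uniform constant $100$: it is a Poincar\'e/spectral-gap estimate for the conjugation action of $\langle\vec u_s\rangle$ on $L^2(\m_\balpha(\Gamma)^{\otimes t_s})$ modulo the fixed-point subspace, and it is here that the non-amenability of $\mathbb{F}_2$ (the choice $\Gamma=\mathbb{F}_2$, or any group with the relevant rigidity) is exploited; this is a mild repackaging of the estimate isolated in \cite{BCI}, together with a check that it survives the finite tensor powers $\otimes t_s$, which is why the constant is taken so generously.

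For (1), I would start from the factorwise relative commutant. Since $\tilde\Gamma_{\balpha,n}$ is the direct sum of the copies of $K_{\balpha^\#}(\Gamma)$ indexed by $i\ge n$, the algebra $P_{\balpha,n}$ is an infinite tensor product of copies of $\m_{\balpha^\#}(\Gamma)$, one for each $i\ge n$; and since $\Gamma$ ICC makes $K_{\balpha^\#}(\Gamma)$ ICC and hence $\m_{\balpha^\#}(\Gamma)$ a II$_1$ factor, there is a genuine tensor-product splitting
\[ P_{\balpha,n_s}^{\otimes t_s}\cong P_{\balpha,m_s}^{\otimes t_s}\otimes\m_{\balpha^\#}(\Gamma)^{\otimes t_s(m_s-n_s)}. \]
By the commutation theorem and factoriality of $P_{\balpha,m_s}^{\otimes t_s}$ this gives $(P_{\balpha,m_s}^{\otimes t_s})'\cap P_{\balpha,n_s}^{\otimes t_s}=\m_{\balpha^\#}(\Gamma)^{\otimes t_s(m_s-n_s)}$, so with $t'_s:=t_s(m_s-n_s)$ the generalized McDuff ultraproduct $\prod_\u\m_{\balpha^\#}(\Gamma)^{\otimes t'_s}$ is automatically contained in $(\prod_\u P_{\balpha,m_s}^{\otimes t_s})'\cap\prod_\u P_{\balpha,n_s}^{\otimes t_s}$. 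The content of (1) is the reverse inclusion: that this relative commutant is in fact computed factorwise, with no extra ``diagonal'' elements. A tensor splitting by itself does not force this, so here I would feed in (2): the fact that $P_{\balpha,m_s}^{\otimes t_s}=C(\vec w_s)$ for a pair of good unitaries $\vec w_s$ in $\m_\balpha(\Gamma)^{\otimes t_s}$ supplies exactly the uniform local spectral-gap control needed to make $(\prod_\u\,\cdot\,)'$ respect the ultraproduct decomposition. Turning the factorwise identity together with this residuality into the reverse inclusion --- a relative-commutant-in-an-ultraproduct argument of the kind carried out in \cite{BCI} --- is the technical heart of (1).

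For (3), I would run a L\"owenheim--Skolem argument. Fix a $\|\cdot\|_2$-dense sequence $(a^{(k)})_k$ in the unit ball of $A$ and representatives $a^{(k)}=(a^{(k)}_s)_\u$. The key observation is that any element of $\m_\balpha(\Gamma)^{\otimes t_s}$ lying in $L$ of the subgroup generated, in each of the $t_s$ tensor coordinates, by finitely many of the copies of $K_{\balpha^\#}(\Gamma)$, finitely many $\Lambda_j$'s, and (when $\alpha_0=1$) finitely many elements of $S_\infty$, automatically commutes with $\tilde\Gamma_{\balpha,N}$ --- hence with $P_{\balpha,N}^{\otimes t_s}$ --- as soon as $N$ exceeds every index and every support involved, since a copy of $K_{\balpha^\#}(\Gamma)$ of index $i\ge N$ centralizes every other copy, every $\Lambda_j$ with $j\le i$, and every permutation whose support avoids $i$. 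Now choose, for each $s$, such an $N_s$ so large that $a^{(k)}_s$ lies within $1/s$ of a subalgebra of this form for all $k\le s$, and set $n_s:=N_s$. Then $\|[a^{(k)}_s,x_s]\|_2\to 0$ along $\u$ for every $k$ and every unitary $x=(x_s)_\u\in\prod_\u P_{\balpha,n_s}^{\otimes t_s}$; hence $x$ commutes with each $a^{(k)}$, and then, by density, with all of $A$. Therefore $\prod_\u P_{\balpha,n_s}^{\otimes t_s}\subseteq A'\cap\prod_\u\m_\balpha(\Gamma)^{\otimes t_s}$, as required.
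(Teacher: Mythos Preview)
Your overall architecture---build (2) factorwise from a residuality estimate and pass to the ultraproduct, deduce the tensor splitting in (1) and then justify that the relative commutant is computed coordinatewise, and run a density/index-exhaustion argument for (3)---matches the approach outlined in \cite{braddisaac} (which is all the present paper does: it cites that reference rather than reproving anything). Two points deserve correction, however.

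First, in (2) you attribute the residuality inequality to the nonamenability of $\Gamma$. That is not where it comes from: the estimate (cf.\ Lemma~\ref{keylemma} here, which packages \cite[Lemmas~2.6--2.10]{BCI}) holds for \emph{any} countable group $\Gamma$ and is driven entirely by the combinatorics of the $T_\alpha$ construction---specifically, the way the $\Lambda_j$'s act on the copies $\Gamma_i$. The explicit good unitaries are built from generators of the $\Lambda_j$'s, not from elements of the copies of $K_{\balpha^\#}(\Gamma)$ as you suggest. Nonamenability enters the paper only later, in Fact~\ref{bci4.6}, and plays no role in Facts~\ref{key.fact}.

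Second, and more seriously, your plan to ``feed in (2)'' to obtain the reverse inclusion in (1) does not work as written. Item (2) produces good unitaries $\vec w_s\in \m_\balpha(\Gamma)^{\otimes t_s}$ with $C(\vec w_s)=P_{\balpha,m_s}^{\otimes t_s}$; this controls the distance of an arbitrary element \emph{to} $P_{\balpha,m_s}^{\otimes t_s}$ in terms of commutators with $\vec w_s$. What (1) needs is the opposite: elements \emph{inside} $P_{\balpha,m_s}^{\otimes t_s}$ whose commutators control the distance of $\zeta\in P_{\balpha,n_s}^{\otimes t_s}$ to the relative commutant $(P_{\balpha,m_s}^{\otimes t_s})'\cap P_{\balpha,n_s}^{\otimes t_s}$. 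Since $\vec w_s\notin P_{\balpha,m_s}^{\otimes t_s}$, the hypothesis that $\zeta$ commutes with $\prod_\u P_{\balpha,m_s}^{\otimes t_s}$ gives you no information about $[\zeta,\vec w_s]$. The spectral-gap input actually required here is a separate estimate---again extracted from \cite[\S2]{BCI}---yielding uniformly finitely many unitaries \emph{in} $P_{\balpha,m_s}^{\otimes t_s}$ with the correct residuality property relative to $P_{\balpha,n_s}^{\otimes t_s}$; this is what makes the ultraproduct relative commutant collapse to the coordinatewise one. Your sketch for (3) is fine.
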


%\begin{proof}
%(1) is \cite[Lemma 3.3]{BCI} and (3) is \cite[Corollary 2.4]{BCI}.  We now mention how (2) follows from various parts of \cite{BCI}.  For simplicty, let us suppose $\alpha=(0)$; the general case is no harder, only notationally more cumbersome.  Fix $n$ and note that
%$$T_0(\Gamma)=\langle \tilde{\Gamma},\Lambda_1,\ldots,\Lambda_{n-1}\rangle *_{\tilde{\Gamma}_{n}}\langle \tilde{\Gamma}_{n},\Lambda_{n},\Lambda_{n+1},\ldots\rangle.$$  Let $a$ and $b$ be generators of $\Lambda_1$ and $\Lambda_n$, respectively.  For any $t\geq 1$, consider the elements $a_t:=(a,\ldots,a)$ and $b_t:=(b,\ldots,b)$ of $T_0(\Gamma)^{\oplus t}$.  Note that $a_t$ and $b_t$ commute with $\tilde{\Gamma}_n^t$.  Let $u_{t,n}$ and $v_{t,n}$ denote the corresponding unitaries of $L(T_0(\Gamma))^{\otimes t}$ and note that $u_{t,n}$ and $v_{t,n}$ commute with $L(\tilde{\Gamma}_n)^{\otimes t}$.  Moreover, the proofs of \cite[Lemmas 2.8-2.10]{BCI} show that $u_{t,n}$ and $v_{t,n}$ witness that $L(\tilde{\Gamma}_n)^{\otimes t}$ is a $(2,100)$-residual subalgebra of $L(T_0(\Gamma))^{\otimes t}$.  Thus, given any sequence $(n_s)$, we have unitaries $(u_{t_s,n_s})^{\bullet},(v_{t_s,n_s})^{\bullet}\in \prod_\u \m_{\balpha}(\Gamma)^{\otimes t_s}$ that commute with $\prod_\u P_{\balpha,n_s}^{\otimes t_s}$.  Moreover, the proof of \cite[Lemma 2.6]{BCI} shows that $\prod_\u P_{\balpha,n_s}^{\otimes t_s}$ lies in $C((u_{t_s,n_s})^{\bullet},(v_{t_s,n_s})^{\bullet})$.
%\end{proof}

\section{Distinguishing examples at level one}

In this section, we will find sentences that distinguish $L(T_0(\Gamma))$ and $L(T_1(\Gamma))$ for nonamenable groups $\Gamma$.  For the purposes of the next section, where the main theorem of the paper is proved, we will actually need to prove a bit more.

In the rest of this paper, we set $\chi(X,U_1,U_2):=100(\|[X,U_1]\|^2_2+\|[X,U_2]\|^2_2)$.

\begin{lemma}\label{keylemma}
Let $\Gamma$ be a countable group and $\alpha\in \{0,1\}$.  For any $t,n\in \n$ with $t\geq 1$, there are $a,b\in \bigoplus_t T_\alpha(\Gamma)$ such that, for any $\zeta\in L(\bigoplus_t T_\alpha(\Gamma))$, we have
$$\|\zeta-\mathbb{E}_{L(\bigoplus_t \widetilde{\Gamma_n})}(\zeta)\|_2^2\leq \chi(\zeta,u_a,u_b)^{L(\bigoplus_t T_\alpha(\Gamma))}.$$
\end{lemma}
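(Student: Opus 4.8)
The plan is to reduce the statement to a concrete averaging inequality over the group by exploiting the explicit structure of $T_\alpha(\Gamma)$, and in particular the generators $\Lambda_i$ commuting with the $\Gamma_i$'s only for $i\geq j$. First I would pick the witnesses $a,b$ to be (images in $\bigoplus_t T_\alpha(\Gamma)$ of) suitable elements built from the generators $\lambda_1,\dots,\lambda_n$ of the first $n$ copies of $\Lambda_i$ together with whatever group elements are needed to detect the $S_\infty$-part when $\alpha=1$; the point of the tensor power $\bigoplus_t$ (equivalently the $t$-fold direct sum of groups, giving $L(\cdots)\cong L(T_\alpha(\Gamma))^{\otimes t}$) is that one can take $a$ and $b$ to be the ``diagonal'' versions of single-factor witnesses, so it suffices to treat $t=1$ and then tensor. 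So the real content is: for $t=1$, exhibit $a,b\in T_\alpha(\Gamma)$ with
$$\|\zeta-\mathbb{E}_{L(\widetilde{\Gamma_n})}(\zeta)\|_2^2\leq 100\big(\|[\zeta,u_a]\|_2^2+\|[\zeta,u_b]\|_2^2\big)$$
for all $\zeta\in L(T_\alpha(\Gamma))$.

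The key steps, in order: (1) Expand $\zeta=\sum_{g\in T_\alpha(\Gamma)}\zeta_g u_g$ in the group basis, so all three quantities become weighted counting expressions in the $|\zeta_g|^2$. Concretely, $\|\zeta-\mathbb{E}_{L(\widetilde{\Gamma_n})}(\zeta)\|_2^2=\sum_{g\notin \widetilde{\Gamma_n}}|\zeta_g|^2$, while $\|[\zeta,u_a]\|_2^2$ unwinds to a sum of terms $|\zeta_g-\zeta_{a g a^{-1}}|^2$ (using $\|\cdot\|_2$ invariance), or more precisely, after grouping by conjugacy orbits of $\langle a\rangle$, a sum like $\sum$ over orbits of (length of orbit) times (variance of $\zeta$ along the orbit). (2) Choose $a,b$ so that the joint conjugation action of $\langle a\rangle$ and $\langle b\rangle$ moves every $g\notin\widetilde{\Gamma_n}$ nontrivially, and moreover does so ``efficiently'': for each such $g$, at least one of $a g a^{-1}\neq g$ or $b g b^{-1}\neq g$ holds, with the orbit structure controlled. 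This is where the defining relations matter — an element of $T_\alpha(\Gamma)$ lies in $\widetilde{\Gamma_n}$ iff it is a product of elements of the copies $\Gamma_i$ with $i\geq n$ (and, when $\alpha=1$, no $S_\infty$ part and appropriate $\Lambda$-part), and conjugation by $\lambda_j$ acts nontrivially on any element with nonzero ``$\Gamma_i$ for $i<j$'' or ``$\Lambda$/permutation'' content. (3) Assemble the pointwise estimate: show that for each orbit $O$ of the $\langle a,b\rangle$-conjugation action that meets the complement of $\widetilde{\Gamma_n}$, one has $\sum_{g\in O}|\zeta_g|^2\leq 100\cdot(\text{the orbit's contribution to the commutator terms})$. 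The universal constant $100$ should come out with plenty of room — the analogous bound for a single commutator with a fixed-point-free order-$p$ rotation of a finite orbit gives a constant like $2$ or $4$; the factor $100$ is the same slack already used in the definition of good unitaries, and mirrors the $(2,100)$-residuality of \cite{BCI}.

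The main obstacle is step (2): choosing $a,b$ (and handling the $\alpha=1$ case, where the $S_\infty$ action on $\bigoplus\Gamma$ is present) so that the conjugation action is genuinely fixed-point-free on $T_\alpha(\Gamma)\setminus\widetilde{\Gamma_n}$ \emph{and} every orbit is infinite in a way that makes the variance-vs-value inequality hold with a uniform constant. For $\alpha=0$ it is plausible that $a=\lambda_1\cdots\lambda_n$ (or a single well-chosen $\lambda$ twisted to reach all $i<n$) together with a second element $b$ taking care of the ``tail'' directions suffices; for $\alpha=1$ one must additionally arrange that conjugation detects permutations with support $\geq n$, which is exactly the feature by which $T_1$ differs from $T_0$, so here I would use an element of $S_\infty$ moving the first $n$ coordinates cyclically (or a transposition), combined with a $\lambda$. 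I expect the verification that these explicit $a,b$ do the job to be the only delicate part; once the orbit structure is pinned down, the inequality is a finite elementary estimate and the constant $100$ is safely large.
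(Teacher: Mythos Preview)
The paper's own proof is simply a citation to \cite[Lemmas 2.6--2.10]{BCI}, so the comparison is really between your outline and the argument carried out there. Your overall framework---Fourier expansion in the group basis, reduction to the orbit structure of the conjugation action of $\langle a,b\rangle$, and the passage from general $t$ to $t=1$ via diagonal witnesses---is the right shape and is indeed how BCI is organised. The gap is in your step~(3).

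You describe the orbit estimate as ``a finite elementary estimate'' analogous to a fixed-point-free rotation of a finite orbit, with the constant $100$ coming out ``with plenty of room''. This is where the proposal breaks down. The orbits of $\langle a,b\rangle$ acting by conjugation on $T_\alpha(\Gamma)\setminus\widetilde{\Gamma}_n$ are \emph{infinite}, and for an infinite $\mathbb Z$-orbit there is no inequality of the form $\sum_k|c_k|^2\leq C\sum_k|c_{k+1}-c_k|^2$ whatsoever (take $c_k$ to be the indicator of $\{|k|\leq N\}$ and let $N\to\infty$). What makes the argument work in BCI is that the chosen $a,b$ generate a \emph{free} subgroup---this uses the amalgamated-free-product structure of $T_\alpha(\Gamma)$ coming from the absence of relations among the $\Lambda_i$'s and between $\Lambda_j$ and $\Gamma_i$ for $i<j$---and that the conjugation representation of this free group on $\ell^2(T_\alpha(\Gamma))\ominus\ell^2(\widetilde{\Gamma}_n)$ is contained in a multiple of its left regular representation. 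The constant then comes from Kesten's spectral bound $\|\lambda(a)+\lambda(a)^{-1}+\lambda(b)+\lambda(b)^{-1}\|_{\ell^2(F_2)}=2\sqrt3<4$, not from any finite combinatorial count. Establishing that containment (equivalently, a malnormality-type statement about how $\langle a,b\rangle$ sits inside $T_\alpha(\Gamma)$ relative to $\widetilde{\Gamma}_n$) is exactly the content of the cited BCI lemmas and is missing from your sketch. Correspondingly, your tentative witnesses (``$a=\lambda_1\cdots\lambda_n$'' with an unspecified $b$, or an $S_\infty$ element when $\alpha=1$) are not chosen so as to generate a free group acting freely on the complement, so step~(2) as written does not set up step~(3) either.
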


\begin{proof}
This follows from \cite[Lemmas 2.6-2.10]{BCI}.
\end{proof}

\begin{df}
We set $\psi_m(V_a,V_b)$ to be the formula
$$\sup_{\vec X,\vec Y}((\inf_U \max_{1\leq i,j\leq m}\|[UX_iU^*,Y_j]\|_2)\dotminus 2\max_{1\leq i\leq m}\sqrt{\chi(X_i,V_a,V_b)})$$ and set $\tau_m:=\inf_{V_a,V_b}\psi_m$. 
\end{df}

\begin{prop}\label{true}
Suppose that $\Gamma$ is a countable group and that $t\geq 1$.  Then for any $m\geq 1$, we have
$$\tau_m^{L(\bigoplus_t T_1(\Gamma))}=0.$$
\end{prop}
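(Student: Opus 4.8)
The plan is to unwind the definitions and produce, for the given $m$, witnesses for the innermost quantifiers that make $\psi_m$ evaluate to $0$, so that the infimum $\tau_m$ is $0$. Write $\m := L(\bigoplus_t T_1(\Gamma))$. Since $T_1(\Gamma) = (T_1 \circ \operatorname{id})(\Gamma)$ incorporates the permutation group $S_\infty$, the key structural feature is that the copies $\Gamma_i$ of $\Gamma$ (equivalently their group algebras inside $\m$) can be moved around by the unitaries $u_\sigma$ for $\sigma \in S_\infty$. First I would fix $n$ (any value, say $n=1$) and apply Lemma~\ref{keylemma} to obtain $a,b \in \bigoplus_t T_1(\Gamma)$ with the property that for all $\zeta \in \m$,
$$\|\zeta - \mathbb{E}_{L(\bigoplus_t \widetilde{\Gamma_n})}(\zeta)\|_2^2 \leq \chi(\zeta, u_a, u_b)^{\m}.$$
I will show that $V_a := u_a$, $V_b := u_b$ witness $\psi_m^{\m}(u_a,u_b) = 0$, which immediately gives $\tau_m^{\m} = 0$ since $\tau_m$ is an infimum of nonnegative quantities (the $\dotminus$ truncates at $0$).

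So the task reduces to: for every choice of tuples $\vec x, \vec y$ from $\m$ (of the appropriate lengths),
$$\inf_U \max_{1 \leq i,j \leq m} \|[U x_i U^*, y_j]\|_2 \ \leq\ 2 \max_{1 \leq i \leq m} \sqrt{\chi(x_i, u_a, u_b)^{\m}}.$$
Set $\e := \max_i \sqrt{\chi(x_i,u_a,u_b)^{\m}}$. By Lemma~\ref{keylemma}, each $x_i$ is within $\ell^2$-distance $\e$ of $\hat x_i := \mathbb{E}_{L(\bigoplus_t \widetilde{\Gamma_n})}(x_i)$, which lies in (the $\otimes t$-fold amplification of) the algebra generated by copies $\Gamma_i$ with index $\geq n$. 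The plan is then to exhibit a single unitary $u_\sigma$ coming from $S_\infty$ (again amplified over the $t$ coordinates — here one uses that $\bigoplus_t T_1(\Gamma)$ contains $t$ commuting copies of $S_\infty$, or more carefully a diagonal action, so the conjugation $u_\sigma(\cdot)u_\sigma^*$ maps $L(\bigoplus_t \widetilde{\Gamma_n})$ onto a copy supported on a block of indices disjoint from the block supporting the $y_j$'s, after first invoking fact that the $y_j$'s can be $\ell^2$-approximated by elements with finite group-algebra support). Conjugating by such $\sigma$ moves all the $\hat x_i$ into an algebra that commutes with the (finitely supported) approximants of the $y_j$; hence $\|[u_\sigma \hat x_i u_\sigma^*, \hat y_j]\|_2 = 0$, and a triangle-inequality estimate replacing $x_i \rightsquigarrow \hat x_i$ and $y_j \rightsquigarrow \hat y_j$ controls the error by $2\e$ (one factor $\e$ from each substitution, using $\|u_\sigma \zeta u_\sigma^*\|_2 = \|\zeta\|_2$ and $\|[\cdot,\cdot]\|_2 \leq 2\|\cdot\|_\infty\|\cdot\|_2$-type bounds, with the $y_j$ approximation made arbitrarily good so it contributes nothing in the limit, leaving exactly the $2\e$).

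The main obstacle I anticipate is the bookkeeping needed to choose $\sigma$ \emph{uniformly} against \emph{arbitrary} (countably infinite) tuples $\vec x, \vec y$: since $\psi_m$ has a $\sup$ over all such tuples but only finitely many ($m$-many) coordinates actually appear in the inner $\max$, I first restrict attention to $x_1,\dots,x_m$ and $y_1,\dots,y_m$; then, because $S_\infty$-permutations move only finitely many indices, I must approximate $\hat x_1,\dots,\hat x_m$ and $y_1,\dots,y_m$ by elements with finite support in the group and pick $\sigma$ shifting the (finite) support set of the $\hat x_i$-approximants past the (finite) support set of the $y_j$-approximants — all indices being $\geq n$ for the $\hat x_i$ side, which is exactly what Lemma~\ref{keylemma} guarantees and what makes room for such a shift. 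Care is also needed to handle the $t$-fold amplification correctly, but this is routine once the $t=1$ case is clear. Taking the approximations finer and finer shows the inequality holds with the clean constant $2\e$, completing the proof that $\psi_m^{\m}(u_a,u_b) = 0$ and hence $\tau_m^{\m}=0$.
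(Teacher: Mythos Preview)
Your proposal is correct and follows essentially the same route as the paper's proof: apply Lemma~\ref{keylemma} with $n=1$ to obtain $u_a,u_b$, reduce to finitely many coordinates $x_1,\dots,x_m,y_1,\dots,y_m$, push the $x_i$'s (via conditional expectation and a further finite-support approximation) into $L(\bigoplus_t\bigoplus_{j\le k}\Gamma_j)$, approximate the $y_j$'s by elements supported in $L(\bigoplus_t H_p)$ for large $p$, and then conjugate by the diagonal permutation unitary $u_{(\sigma,\dots,\sigma)}$ with $\sigma$ moving $\{1,\dots,k\}$ past $p$. The only minor imprecision is your remark that ``indices $\ge n$ \dots\ makes room for such a shift'': with $n=1$ this says nothing; what actually matters is that the $\hat x_i$ land in the subalgebra generated by the $\Gamma_j$'s alone (no $\Lambda_j$'s or $S_\infty$), which is precisely what the permutation action can shift.
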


\begin{proof}
Apply Lemma \ref{keylemma} with $n=1$, obtaining $a,b\in \bigoplus_t T_1(\Gamma)$.  Let $V_a:=u_a$ and $V_b:=u_b$.  Fix $m$-tuples $\vec x,\vec y\in L(\bigoplus_t T_1(\Gamma))$ and $\epsilon>0$.  For each $i=1,\ldots,m$, we have that
$$\|x_i-\mathbb{E}_{L(\bigoplus_t \widetilde{\Gamma})}(x_i)\|_2^2\leq \chi(x_i,u_a,u_b)^{L(\bigoplus_t T_1(\Gamma))}.$$  In particular, there is $k>0$ such that
$$\|x_i-\mathbb{E}_{L(\bigoplus_t \bigoplus_{j\leq k}{\Gamma_j})}(x_i)\|_2\leq \sqrt{\chi(x_i,u_a,u_b)^{L(\bigoplus_t T_1(\Gamma))}}+\epsilon.$$  
Set $x_i^+:=\mathbb{E}_{L(\bigoplus_t \bigoplus_{j\leq k}{\Gamma_j})}(x_i)$ and $x_i^-:=x_i-x_i^+$.  Let $H_p$ be the subgroup of $T_1(\Gamma)$ generated by $\bigoplus_{j\leq p}\Gamma_i \rtimes S_p$ and $\Lambda_1,\ldots,\Lambda_p$.  For $p>0$ sufficiently large, setting $y_i^+:=\mathbb{E}_{L(\bigoplus_t H_p)}(y_i)$ and $y_i^-:=y_i-y_i^+$, we have $\|y_i^-\|_2\leq \epsilon$.

Choose $\sigma\in S_\infty$ with $\sigma(j)>p$ for all $j\leq m$.  Let $\sigma_1:=(\sigma,\sigma,\ldots,\sigma)\in \bigoplus_t L(T_1(\Gamma))$.  Note that $\sigma_1(\bigoplus_t \bigoplus_{j\leq m}\Gamma_j)\sigma_1^{-1}$ commutes with $L(\bigoplus_t H_p)$.  Let $u\in U(L(\bigoplus_t T_1(\Gamma)))$ be the unitary corresponding to $\sigma_1$.  It follows, for $1\leq i,j\leq m$, that $[ux_i^+u^*,y_j+]=0$, so
$$\|ux_iu^*,y_j\|_2\leq \|[ux_i^+u^*,y_j^-]\|_2+\|[ux_i^-u^*,y_j^+]\|_2+\|[ux_i^-u^*,y_j^-]\|_2.$$

Now $$\|[ux_i^+u^*,y_j^-]\|_2\leq \|ux_i^+u^*y_j^-\|_2+\|y_j^-ux_i^+u^*\|_2\leq 2\|y_j^-\|_2\leq 2\epsilon.$$  Here we use that conditional expectation is a contractive map, so $\|x_i^+\|\leq \|x_i\|\leq 1$.  Since $\|x_i^-\|\leq 2$, one shows that $\|[ux_i^-u^*,y_j^-]\|_2\leq 4\epsilon$ in a similar fashion.  Finally, we have
$$\|[ux_i^-u^*,y_j^+]\|_2\leq 2\|x_i^-\|_2\leq 2(\sqrt{\chi(x_i,u_a,u_b)^{L(\bigoplus_t T_1(\Gamma))}}+\epsilon). $$
Letting $\epsilon$ go to $0$, we get the desired result.
\end{proof}

The following is probably obvious and/or well-known, but in any event:

\begin{lemma}
There is a function $\upsilon:\r^*\to \r^*$ such that, for every $\epsilon>0$ and an inclusion $N\subseteq M$ of II$_1$ factors, if $x\in N$ is such that $d(x,U(M))<\upsilon(\epsilon)$, then $d(x,U(N))<\epsilon$.
\end{lemma}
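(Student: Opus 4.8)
The plan is to prove this by a compactness/approximation argument on functional-analytic facts about II$_1$ factors, but since the statement is purely quantitative, I would first try a direct argument and fall back on a contradiction-via-ultraproducts argument if the constants get unwieldy. The key point is that if $x \in N$ has $\|x\|_2 = 1$-ish and is very close (in operator norm, since $d$ presumably denotes the metric coming from $\|\cdot\|$) to a unitary $w \in M$, then $x$ is \emph{almost} a unitary: $\|x^*x - 1\|$ and $\|xx^* - 1\|$ are both small. The first idea is to polar-decompose $x$ \emph{inside $N$}: write $x = v|x|$ with $v \in N$ a partial isometry and $|x| = (x^*x)^{1/2} \in N$. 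Because $x$ is close to a unitary, $|x|$ is close to $1$ and hence invertible in $N$, which forces $v$ to be a genuine unitary in $N$ (a partial isometry with invertible modulus in a finite von Neumann algebra has full final projection, and equals $x|x|^{-1}$). Then one estimates $d(x, v) = \|x - v\| = \|v(|x| - 1)\| = \||x| - 1\|$, which is controlled by $\|x^*x - 1\|$ via the operator inequality $\|(a)^{1/2} - 1\| \le \|a - 1\|$ for positive $a$ near $1$, which in turn is controlled by $d(x, U(M))$. Chaining these estimates produces an explicit $\upsilon$; for instance $\upsilon(\epsilon)$ of order $\epsilon/10$ should comfortably work once one is careful about whether $d(x, U(M))$ small also forces $\|x\|$ near $1$ (it does: if $d(x,w) < \delta$ with $w$ unitary then $1 - \delta < \|x\| < 1 + \delta$).

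The key steps, in order, are: (1) from $d(x, U(M)) < \delta$ extract a unitary $w \in M$ with $\|x - w\| < \delta$ and deduce $\|x^*x - 1\| = \|x^*x - w^*w\| \le \|x^* - w^*\|\,\|x\| + \|w^*\|\,\|x - w\| \le \delta(1 + \delta) + \delta$, so $\|x^*x - 1\|$ is $O(\delta)$; (2) since $x \in N$, observe $|x| := (x^*x)^{1/2} \in N$ and estimate $\| |x| - 1 \|$ by $\|x^*x - 1\|$ (e.g.\ using $\| |x|^2 - 1\| = \| (|x| - 1)(|x| + 1)\| \ge \| |x| - 1\|$ since $|x| + 1 \ge 1$); (3) for $\delta$ small enough that $\| |x| - 1\| < 1$, conclude $|x|$ is invertible in $N$ and set $v := x |x|^{-1} \in N$; check $v^*v = |x|^{-1} x^* x |x|^{-1} = 1$ and, using finiteness of $N$, that $vv^* = 1$ as well, so $v \in U(N)$; (4) conclude $d(x, U(N)) \le \|x - v\| = \|v(|x| - 1)\| = \| |x| - 1\|$ and choose $\upsilon(\epsilon)$ so that all the preceding $O(\delta)$ bounds are $< \epsilon$.

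The main obstacle — really the only subtle point — is step (3): showing that the partial isometry $v$ obtained from the polar decomposition is actually a \emph{unitary} in $N$, not merely in $M$. This uses that $N$ is a II$_1$ (hence finite) factor, so the right support and left support projections of $x$ are equivalent in $N$; combined with $|x|$ invertible (right support $= 1$) this gives left support $= 1$, i.e.\ $v$ is unitary. (Alternatively, and more cleanly, once $|x|$ is invertible in $N$ one simply writes $v = x|x|^{-1}$ directly and checks $v^*v = vv^* = 1$ by the computation above, using that in a finite factor $v^*v = 1 \Rightarrow vv^* = 1$ for elements of the form $v = x|x|^{-1}$ — though in fact $v^*v = 1$ already forces $vv^*$ to be a projection equivalent to $1$, hence equal to $1$ by finiteness.) Everything else is routine norm estimation, and the resulting $\upsilon$ can be taken to be an explicit linear-type function of $\epsilon$, e.g.\ $\upsilon(\epsilon) = \epsilon/6$ after optimizing the constants, which is more than enough for the applications.
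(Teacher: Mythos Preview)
Your argument has a genuine gap stemming from the metric: in this paper (as in the continuous model theory of tracial von Neumann algebras generally, and as the application in Proposition~\ref{false} makes explicit), $d$ is the $\|\cdot\|_2$-metric, not the operator-norm metric. Under $\|\cdot\|_2$, step~(3) fails: $\|x^*x-1\|_2$ small does \emph{not} force $|x|$ to be invertible in $N$ (for instance $x^*x$ may vanish on a projection of arbitrarily small trace), so you cannot form $x|x|^{-1}$. The strategy can be repaired: take the polar decomposition $x=v|x|$ in $N$, use that $N$ is a II$_1$ factor to extend the partial isometry $v$ to a unitary $u\in N$ (since $1-v^*v\sim 1-vv^*$), and then bound $\|x-u\|_2$ by $\||x|-1\|_2+\|1-v^*v\|_2^{1/2}$; both terms are controlled by $\|x^*x-1\|_2$ via spectral calculus. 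So the idea survives, but not as written.

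For comparison, the paper's proof is softer and avoids any explicit construction. It sets $\psi(x):=\max(d(x^*x,1),d(xx^*,1))$ and invokes the fact that $\psi$ is \emph{weakly stable} relative to II$_1$ factors: there is $\eta>0$ (depending only on $\epsilon$) such that $\psi(x)^N<\eta$ implies $d(x,U(N))<\epsilon$, uniformly in $N$. One then takes $\upsilon(\epsilon)=\Delta_\psi(\eta)$, the modulus of uniform continuity of $\psi$, and observes that $\psi$ is quantifier-free, so $\psi(x)^N=\psi(x)^M$ for $x\in N$. The conceptual point---that ``being almost unitary'' is intrinsic to $x$ and does not see the ambient algebra---is the same one underlying your approach; your corrected argument is essentially a concrete proof of the weak stability of $\psi$, with the bonus of an explicit $\upsilon$ that the paper's proof does not supply.
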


\begin{proof}
Let $\psi(x):=\max(d(x^*x,1),d(xx^*,1))$.  Then $\psi$ is weakly stable, so there is $\eta>0$ such that if $N$ is any II$_1$ factor and $\psi(x)^N<\eta$, then $d(x,U(N))<\epsilon$.  Let $\upsilon(\epsilon):=\Delta_\psi(\eta)$, where $\Delta_\psi$ is the modulus of uniform continuity for the formula $\psi$.  Now suppose that $N\subseteq M$ are II$_1$ factors and $x\in N$ is such that $d(x,U(M))<\upsilon(\epsilon)$.  Then $\psi(x)^N=\psi(x)^M<\eta$, whence $d(x,U(N))<\epsilon$.
\end{proof}

The following result, which is Lemma 4.6 in \cite{BCI}, will be very important to us.  In what follows, $\pi_n:\Gamma\to \tilde{\Gamma}$ is the canonical embedding with $\pi_n(\Gamma)=\Gamma_n$.

\begin{fact}\label{bci4.6}
Suppose that $\Gamma$ is a countable non-amenable group and $Q$ is a tracial von Neumann algebra.  Then there are $g_1,\ldots,g_m\in \Gamma$ and a constant $C>0$ such that, for any $n\geq 1$, unitaries $v_1,\ldots,v_m\in U(L(\tilde{\Gamma}_{n+1}\otimes Q))$, and $\zeta \in L(T_0(\Gamma))\otimes Q$, we have that
$$\|\zeta\|_2\leq C\sum_{k=1}^m \|u_{\pi_n}(g_k)\zeta-\zeta v_k\|_2.$$
\end{fact}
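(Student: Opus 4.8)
The statement is Lemma 4.6 from \cite{BCI}, so I would simply cite that paper; but to sketch a self-contained argument I would reduce to a spectral-gap/non-amenability dichotomy. Suppose toward a contradiction that the conclusion fails for the group $\Gamma$. Then for every choice of finitely many $g_1,\dots,g_m\in\Gamma$ and every $C>0$ there is $n\geq 1$, there are unitaries $v_1,\dots,v_m\in U(L(\tilde\Gamma_{n+1}\otimes Q))$, and there is $\zeta\in L(T_0(\Gamma))\otimes Q$ with $\|\zeta\|_2 > C\sum_k \|u_{\pi_n(g_k)}\zeta - \zeta v_k\|_2$. The idea is to run this over an exhaustion of $\Gamma$ by finite symmetric sets $F_1\subseteq F_2\subseteq\cdots$ with $\bigcup F_j=\Gamma$ and with constants $C_j\to\infty$, producing vectors $\zeta_j$ that are almost $u_{\pi_{n_j}(g)}$-central, up to the "twist" by the $v_k$'s, for all $g\in F_j$. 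After normalizing $\|\zeta_j\|_2=1$, I would extract from the $\zeta_j$ an almost-invariant vector for a suitable unitary representation of $\Gamma$ and conclude that $\Gamma$ is amenable, contradicting the hypothesis.

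**Carrying it out.** The key structural point is how the left action by $u_{\pi_n(g)}$ interacts with the right multiplication by $v_k\in U(L(\tilde\Gamma_{n+1}\otimes Q))$. Since $\Gamma_n$ and $\tilde\Gamma_{n+1}$ generate, inside $T_0(\Gamma)$, a subgroup isomorphic to $\Gamma_n \times (\text{the rest})$ — more precisely $\Gamma_n$ commutes with $\tilde\Gamma_{n+1}$ because the copies $\Gamma_i$ of $\Gamma$ for distinct $i$ are free-product-with-commuting-relations but the $\Gamma_i$'s among themselves commute in $\tilde\Gamma=\bigoplus\Gamma_i$ — the left translation by $u_{\pi_n(g)}$ and right translation by $v_k$ can be combined: writing $w_{g,k}$ for the operator $\xi\mapsto u_{\pi_n(g)}\xi v_k^*$ on $L^2(L(T_0(\Gamma))\otimes Q)$, the hypothesis failing says $\zeta_j$ is $(F_j,\varepsilon_j)$-almost-fixed by the family $\{w_{g,k}\}$. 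I would then project onto an appropriate $\Gamma_n$-invariant subspace — say, use the bimodule $L^2(L(T_0(\Gamma))\otimes Q)$ viewed as a $\Gamma_n$-$L(\tilde\Gamma_{n+1}\otimes Q)$ bimodule — and average/transfer the almost-invariance to get an almost-invariant vector (or almost-invariant state) for the left regular-type representation of $\Gamma$ alone. The Namioka-type trick (replacing $\zeta$ by $|\zeta|$ in $2$-norm, or passing to the positive part of $\zeta\zeta^*$) converts $L^2$ almost-invariance of a vector into $L^1$ almost-invariance of a state on the relevant von Neumann algebra, which in turn yields a $\Gamma$-invariant mean: amenability.

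**Main obstacle.** The delicate part is the bookkeeping that lets the right-hand twists $v_k$ — which live in the "large" algebra $L(\tilde\Gamma_{n+1}\otimes Q)$ and are completely arbitrary — be absorbed so that what survives is genuinely a representation of $\Gamma$ and nothing else. In \cite{BCI} this is handled by a clever choice of the subalgebra one projects to and by using that $\Gamma_n$ sits in $T_0(\Gamma)$ in a way that commutes with $\tilde\Gamma_{n+1}$; the commuting relation is exactly what makes $w_{g,k}w_{h,l}$ and $w_{gh,?}$ comparable. Concretely, I would fix $Q$ and $n$, consider the conditional expectation $\mathbb{E}$ onto $L(\Gamma_n)\otimes(\text{relative commutant of }\tilde\Gamma_{n+1})$, and show that almost-centrality of $\zeta$ forces $\|\zeta - \mathbb{E}(\zeta)\|_2$ small, reducing to the case $\zeta\in L(\Gamma_n)$-side where the $v_k$'s no longer interfere; then $\Gamma_n\cong\Gamma$ acting on its own $L^2$ has an almost-invariant vector, so $\Gamma$ is amenable. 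Since this is precisely the content of \cite[Lemma 4.6]{BCI}, I would in the actual paper just write:

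\begin{proof}
This is \cite[Lemma 4.6]{BCI}.
\end{proof}
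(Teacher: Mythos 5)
Your bottom line---cite \cite[Lemma 4.6]{BCI}---is exactly what the paper does: Fact~\ref{bci4.6} is stated as an import from \cite{BCI} with no proof environment. However, you overlook the one substantive point the paper is careful to make, namely that the statement here is \emph{strictly more general} than the literal Lemma 4.6 of \cite{BCI}: the unitaries $v_1,\ldots,v_m$ are allowed to lie in $U(L(\tilde{\Gamma}_{n+1}\otimes Q))$ rather than merely in $U(L(\tilde{\Gamma}_{n+1}))$, with $\zeta$ ranging over $L(T_0(\Gamma))\otimes Q$. The paper explicitly flags this discrepancy and justifies it by observing that the proof in \cite{BCI} adapts readily and is in fact already invoked in this generality in the proof of \cite[Lemma 4.4]{BCI}. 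The extra generality is not cosmetic: it is precisely what Proposition~\ref{false} needs, since there one works with an intermediate algebra $L(T_0(\Gamma))\subseteq M\subseteq L(T_0(\Gamma))\otimes Q$. Writing only ``This is \cite[Lemma 4.6]{BCI}'' would therefore be a slightly inaccurate citation; you should add the one-sentence remark about the $Q$-tensored version.

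Concerning your self-contained sketch: the overall shape (non-amenability yields a uniform spectral-gap inequality) is right, but the step you yourself label the main obstacle is resolved incorrectly as written. An almost-invariant vector for the restriction to $\Gamma_n$ of the left regular representation of $T_0(\Gamma)$ does \emph{not} contradict non-amenability---the vector $\hat{1}$ is genuinely invariant---so reducing to ``$\Gamma_n$ acting on its own $L^2$ has an almost-invariant vector'' via a conditional expectation cannot be the mechanism. The standard device for absorbing the arbitrary twists $v_k$ is to pass to $\zeta\otimes_{B}\bar{\zeta}$ in the Connes fusion $H\otimes_B\bar{H}$ with $B=L(\tilde{\Gamma}_{n+1}\otimes Q)$, where the $v_k$ cancel because they are unitaries of $B$; one then must show that the resulting conjugation representation of $\Gamma_n$ is weakly contained in a multiple of the left regular representation of $\Gamma$, which uses the specific way $\Gamma_n$ and $\tilde{\Gamma}_{n+1}$ sit inside $T_0(\Gamma)$ and is the actual content of \cite[Lemma 4.6]{BCI}. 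Since you ultimately defer to the citation, this does not sink the proposal, but the sketch as written would not assemble into a proof.
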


Note that in the version of \cite{BCI} currently available, the lemma only allows for unitaries in $L(\tilde{\Gamma}_{n+1})$ rather than $L(\tilde{\Gamma}_{n+1}\otimes Q)$.  However, the proof readily adapts to this more general situation and, indeed, the lemma is used in this more general form in the proof of \cite[Lemma 4.4]{BCI}.

 For a nonamenable group $\Gamma$, let $C(\Gamma)$ and $m(\Gamma)$ be as in Fact \ref{bci4.6}.

\begin{prop}\label{false}
Suppose that $\Gamma$ is a nonamenable group.    Let $m=m(\Gamma)$, $C=C(\Gamma)$, and $\delta:=\sqrt{\frac{1}{200(30)^2}\upsilon(\frac{1}{2Cm})}$.  Then whenever $M$ is an intermediate subalgebra $L(T_0(\Gamma))\subseteq M\subseteq L(T_0(\Gamma))\otimes Q$, it follows that $\tau_m^M\geq \delta.$
\end{prop}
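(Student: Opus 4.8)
The plan is to argue by contradiction: suppose $\tau_m^M < \delta$ for some intermediate algebra $L(T_0(\Gamma)) \subseteq M \subseteq L(T_0(\Gamma)) \otimes Q$. Unwinding the definition of $\tau_m = \inf_{V_a,V_b} \psi_m$, this gives unitaries $v_a, v_b \in U(M)$ so that $\psi_m(v_a,v_b)^M < \delta$; unwinding $\psi_m$, it then follows that for \emph{every} choice of $m$-tuples $\vec x, \vec y$ in $M$ there is a unitary $u \in U(M)$ with $\max_{i,j} \|[ux_iu^*, y_j]\|_2$ small relative to $\delta + 2\max_i \sqrt{\chi(x_i, v_a, v_b)}$. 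The idea is to feed into this a carefully chosen pair of tuples: for $\vec x$ I would take the canonical unitaries $u_{\pi_0(g_1)}, \ldots, u_{\pi_0(g_m)}$ coming from the group elements $g_1,\ldots,g_m \in \Gamma$ supplied by Fact \ref{bci4.6} (with $n=0$), which sit inside $L(T_0(\Gamma)) \subseteq M$; these $x_i$ are unitaries, hence $\|[x_i, v_a]\|_2, \|[x_i, v_b]\|_2$ — and so $\chi(x_i, v_a, v_b)$ — is controlled by how far $v_a, v_b$ are from commuting with the fixed algebra generated by the $g_k$'s, but more to the point I want to \emph{choose} $\vec y$ adversarially rather than estimate $\chi$.

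The key step is the following dichotomy for the unitaries $v_a, v_b$ produced above. Either (a) $\chi(u_{\pi_0(g_k)}, v_a, v_b)$ is large for some $k$ — say at least $\frac{1}{200(30)^2}\upsilon(\frac{1}{2Cm})$, i.e.\ at least $\delta^2$ scaled appropriately — in which case one of $v_a, v_b$ fails badly to commute with the (finite-dimensionally generated, hence "small") subalgebra $A_0$ generated by $u_{\pi_0(g_1)},\ldots,u_{\pi_0(g_m)}$; or (b) $\chi(u_{\pi_0(g_k)}, v_a, v_b)$ is small for all $k$, meaning $v_a$ and $v_b$ nearly commute with $A_0$. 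In case (b) I want to locate $v_a, v_b$ approximately inside $A_0' \cap (L(T_0(\Gamma))\otimes Q)$, which by the structure of $T_0(\Gamma)$ is essentially $L(\tilde\Gamma_{1}) \otimes (\text{stuff commuting with } \Gamma_0) \otimes Q$ — in particular $v_a, v_b$ are within $\epsilon$ of $U(L(\tilde\Gamma_1 \otimes Q'))$ for an appropriate $Q'$; this is precisely where the lemma with the function $\upsilon$ is used, to pass from "$v_a$ is close to a unitary of the ambient algebra that nearly commutes with $A_0$" to "$v_a$ is close to a unitary of the relative commutant $N := A_0' \cap M$". Then I apply the definition of $\tau_m^M < \delta$ with $\vec x = (u_{\pi_0(g_k)})_k$ and $\vec y = (v_k)_k$ where the $v_k \in U(N)$ are the near-unitaries approximating copies related to $v_a, v_b$ inside $L(\tilde\Gamma_1 \otimes Q)$: the defining inequality hands us a $u$ with $\max_{i,j}\|[u u_{\pi_0(g_i)} u^*, v_j]\|_2$ small, but Fact \ref{bci4.6} applied to $\zeta = u \cdot (\text{something}) \cdot u^*$ — more carefully, to $\zeta$ a nonzero element witnessing that $u_{\pi_0(g_k)}$ cannot be "untangled" from $L(\tilde\Gamma_1 \otimes Q)$ — forces $\|u u_{\pi_0(g_i)} u^* - (\text{unitary in } \tilde\Gamma_1 \text{ part})\|_2$, hence that commutator sum, to be bounded \emph{below} by $\frac{1}{C}$ times something of size roughly $\frac{1}{2Cm}$, a contradiction with $\delta$ chosen small.

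I expect the main obstacle to be case (b): making rigorous the passage "$v_a, v_b$ approximately commute with $A_0$" $\Rightarrow$ "$v_a, v_b$ are $\epsilon$-close to unitaries in the relative commutant $N$", and then correctly setting up the application of Fact \ref{bci4.6}. The $\upsilon$-lemma is tailored exactly for the second half of this step, but the first half — approximate commutation with a finitely generated subalgebra implying approximate membership in its relative commutant — requires knowing something like a "Popa-type" commutation/intertwining estimate, or at minimum that $A_0$ is contained in a copy of a finite-dimensional-ish algebra so that averaging $v_a$ over the (finite) group generated by the $g_k$'s produces a genuine element of $A_0' \cap M$ close to $v_a$. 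Here the precise structure of $\Gamma_0$ inside $T_0(\Gamma)$ matters: the $g_k$ generate a subgroup of $\Gamma$, and conjugation by $v_a$ moving $u_{\pi_0(g_k)}$ only slightly should, via the $\ell^2$-geometry of the group von Neumann algebra, pin down the Fourier support of $v_a$ to the centralizer of $\langle g_1,\ldots,g_m\rangle$ in $T_0(\Gamma)$, which contains $\tilde\Gamma_1$. The constant $\delta = \sqrt{\frac{1}{200(30)^2}\upsilon(\frac{1}{2Cm})}$ is engineered so that the "$100$" in $\chi$, the "$2$" and the "$30$"-type slack in $\psi_m$, and the factor $Cm$ from Fact \ref{bci4.6} all cancel against each other; verifying that the numerology actually closes up is the bookkeeping core of the argument, and I would organize it by first fixing $\epsilon$ small, running the two cases, and only at the end checking that the chosen $\delta$ beats every error term.
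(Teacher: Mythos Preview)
Your dichotomy does not close. In case (a), where $\chi(u_{\pi_0(g_k)},v_a,v_b)$ is large for some $k$, nothing contradictory happens: the definition of $\psi_m$ subtracts $2\max_i\sqrt{\chi(X_i,V_a,V_b)}$ via $\dotminus$, so a large $\chi$ simply makes the inner expression vanish for that choice of $\vec X$, and $\psi_m(v_a,v_b)<\delta$ gives you no usable information. You never explain how case (a) is ruled out, and indeed it cannot be for the fixed copy $\Gamma_0$. The paper avoids this entirely by \emph{not} fixing $n=0$: since $\bigcup_n (P_n'\otimes Q)$ is dense in $L(T_0(\Gamma))\otimes Q$, one chooses $n$ large enough that $\|[\rho_n(g),v_a]\|_2,\|[\rho_n(g),v_b]\|_2<\delta$ for all $g\in\Gamma$, forcing $\chi(\rho_n(g_k),v_a,v_b)\le 200\delta^2$ automatically. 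This is the key structural move you are missing.

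Even in your case (b) the mechanism is misdirected. You try to push $v_a,v_b$ into $A_0'\cap M$ and apply the $\upsilon$-lemma to them; but $A_0$ is generated by a nonamenable group, so ``approximate commutation with generators $\Rightarrow$ proximity to the commutant'' is exactly the kind of averaging that fails, and your own caveat about needing a Popa-type estimate is a real obstruction, not a technicality. The paper instead takes $\vec Y=(u_{a'},u_{b'})$ to be the unitaries from Lemma~\ref{keylemma} witnessing that $L(\tilde\Gamma_{n+1})$ is $(2,100)$-residual. Then $\psi_m(v_a,v_b)<\delta$ produces $u$ with $\|[u\rho(g_k)u^*,u_{a'}]\|_2,\|[u\rho(g_k)u^*,u_{b'}]\|_2<30\delta$, so $v_k:=u\rho(g_k)u^*$ is close to its conditional expectation $v_k'$ onto $L(\tilde\Gamma_{n+1})\otimes Q$; the $\upsilon$-lemma is applied to $v_k'$ (not to $v_a,v_b$) to get genuine unitaries $v_k''\in U(L(\tilde\Gamma_{n+1})\otimes Q)$ with $\|v_k-v_k''\|_2<\tfrac{1}{2Cm}$. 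Fact~\ref{bci4.6} with $\zeta=u$ then yields $1=\|u\|_2\le C\sum_k\|\rho(g_k)u-uv_k''\|_2<\tfrac12$, the contradiction. Your proposal never invokes Lemma~\ref{keylemma}, applies $\upsilon$ to the wrong elements, and leaves the choice of $\zeta$ in Fact~\ref{bci4.6} unspecified.
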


\begin{proof}
Suppose, towards a contradiction, that $v_a,v_b\in U(M)$ are such that $\psi_m(v_a,v_b)^M<\delta$.  For each $n$, let $\rho_n:\Gamma\to U(P_n)$ be given by $\rho_n(g):=u_{\pi_n(g)}$.  Since $\bigcup_n (P_n'\otimes Q)$ is dense in $L(T_0(\Gamma))\otimes Q$, there is $n$ sufficiently large so that $$\max(\|[\rho_n(g),v_a]\|_2,\|[\rho_n(g),v_b]\|_2)<\delta$$ for all $g\in \Gamma$.  Fix such an $n$ and set $\rho:=\rho_n$.  It follows that $\chi(\rho(g),v_a,v_b)^M\leq 200\delta^2$ for all $g\in \Gamma$.

By Lemma \ref{keylemma}, we may find $a',b'\in T_0(\Gamma)$ such that, for all $\zeta\in L(T_0(\Gamma))$, we have
$$\|\zeta-\mathbb{E}_{L(\widetilde{\Gamma_{n+1}})}(\zeta)\|_2^2\leq \chi(\zeta,u_{a'},u_{b'})^{L(T_0)(\Gamma)}.$$  For simplicity, write $\mathbb{E}$ instead of $\mathbb{E}_{L(\widetilde{\Gamma_{n+1}})\otimes Q}$.  It then follows that, for all $\zeta \in L(T_0(\Gamma))\otimes Q$, we have
$$\|\zeta-\mathbb{E}(\zeta)\|_2^2\leq \chi(\zeta,u_{a'},u_{b'})^{L(T_0(\Gamma))\otimes Q}.$$  Let $g_1,\ldots,g_m\in \Gamma$ be as in Fact \ref{bci4.6}.  Since $\psi_m(v_a,v_b)^M<\delta$, we may find $u\in U(M)$ such that, for all $1\leq k\leq m$, we have
$$\max(\|[u\rho(g_k)u^*,u_{a'}]\|_2,\|[u\rho(g_k)u^*,u_{b'}]\|_2)<20\sqrt{2}\delta+\delta\leq 30\delta.$$
Let $v_k:=u\rho(g_k)u^*\in U(L(T_0(\Gamma))\otimes Q)$ and let $v_k':=\mathbb{E}(v_k)$.  It follows that $\|v_k-v_k'\|_2^2\leq \chi(v_k,u_{a'},u_{b'})^{L(T_0(\Gamma))\otimes Q}\leq 200(30\delta)^2.$  By the choice of $\delta$, there is $v_k''\in U(L(\widetilde{\Gamma_{n+1}})\otimes Q)$ such that $\|v_k'-v_k''\|_2<\frac{1}{2Cm}$.  By Fact \ref{bci4.6}, we have that
$$\|u\|_2\leq C\sum_k \|\rho(g_k)-uv_k''\|_2\leq C\sum_k \|uv_k-uv_k''\|_2<\frac{1}{2},$$ yielding the desired contradiction.
\end{proof}

\section{The inductive construction}

In this section, we describe an inductive construction of sentences that allows us to carry out the argument hinted at in \cite[Section 4.2]{braddisaac}.  By \cite[Section 4.2]{braddisaac}, we know that centralizers of good unitaries and relative commutants between centralizers of good unitaries are definable sets, whence we can quantify over them.  We actually need to know that we can do this in a uniform manner that does not depend on the ambient II$_1$ factor nor the good unitaries at hand.  Such uniformity is the content of the next lemma.  Note that if $\m$ is a II$_1$ factor, $u_1,u_2\in \m$ are good unitaries and $x\in \m$, then:
\begin{itemize}
\item $d(x,C(u_1,u_2))\leq \sqrt{\chi(x,u_1,u_2)^{\m}}$
\item if $x\in C(u_1,u_2)$, then $\chi(x,u_1,u_2)^{\m}=0$.
\end{itemize}

\begin{lemma}[Quantification Lemma]\label{quant}

\

\begin{enumerate}
\item For every formula $\psi(X,\vec Y,\vec U)$, there are formulae $\hat{\psi}_s(\vec Y,\vec U)$ and $\hat{\psi}_i(\vec Y,\vec U)$such that, for any II$_1$ factor $\m$, any pair of good unitaries $\vec u\in \m$, and any tuple $\vec y\in \m$, we have
$$\hat{\psi}_s(\vec y,\vec u)^{\m}=\sup\{\psi(x,\vec y,\vec u)^{\m} \ : \ x\in C(\vec u)\}$$ and 
$$\hat{\psi}_i(\vec y,\vec u)^{\m}=\inf\{\psi(x,\vec y,\vec u)^{\m} \ : \ x\in C(\vec u)\}.$$
\item For every formula $\rho(X,\vec Y, \vec U_1,\vec U_2)$, there are formulae $\overline{\rho}_s(\vec Y,\vec U_1,\vec U_2)$ and $\overline{\rho}_i(\vec Y,\vec U_1,\vec U_2)$ such that, for any II$_1$ factor $\m$ and any two pairs of good unitaries $\vec u_1,\vec u_2\in \m$ with $C(\vec u_2)\subseteq C(\vec u_1)$ and any tuple $\vec y\in \m$, we have 
$$\overline{\rho}_s(\vec y,\vec u_1,\vec u_2)^{\m}=\sup\{\rho(x,\vec y,\vec u_1,\vec u_2)^{\m} \ : \ x\in C(\vec u_2)'\cap C(\vec u_1))\}$$ and
$$\overline{\rho}_i(\vec y,\vec u_1,\vec u_2)^{\m}=\inf\{\rho(x,\vec y,\vec u_1,\vec u_2)^{\m} \ : \ x\in C(\vec u_2)'\cap C(\vec u_1))\}.$$
\end{enumerate}
\end{lemma}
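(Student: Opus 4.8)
The plan is to express quantification over the definable sets $C(\vec u)$ and $C(\vec u_2)'\cap C(\vec u_1)$ in terms of ordinary quantification over the whole factor, using the two bulleted remarks just before the statement, which say that the distance to $C(\vec u)$ is controlled by $\sqrt{\chi(x,\vec u)}$ and that elements of $C(\vec u)$ have $\chi$-value zero. Concretely, for part (1), I would first reduce to the case where $\psi$ takes values in $[0,1]$ by composing with a suitable order-isomorphism $[0,1]\to[a,b]$, and then note that since $\psi$ has a modulus of uniform continuity $\Delta_\psi$ in the variable $X$, we may build a ``penalized'' formula. The idea is to set $$\hat\psi_i(\vec Y,\vec U):=\inf_X\bigl(\psi(X,\vec Y,\vec U)\dotplus h(\chi(X,\vec U))\bigr)$$ where $h$ is an explicit increasing continuous function, built from $\Delta_\psi$, that is $0$ at $0$ and grows fast enough that whenever $\chi(x,\vec u)^\m=\varepsilon^2>0$, the term $h(\varepsilon^2)$ dominates any gain in $\psi$ achievable by moving from $x$ to the nearest point of $C(\vec u)$ (whose $\chi$-value is $0$ and which is at distance $\leq\varepsilon$ from $x$). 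Here $\dotplus$ denotes truncated addition so that the expression stays a legitimate $[0,1]$-valued formula; this requires noting that $\psi$ stays $\leq 1$ anyway, so only a mild truncation is needed. For the supremum version one uses $\sup_X(\psi(X,\vec Y,\vec U)\dotminus h(\chi(X,\vec U)))$ instead.

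The verification then splits into two inequalities. On the one hand, any $x\in C(\vec u)$ contributes $\psi(x,\vec y,\vec u)^\m+0$ to the infimum, so $\hat\psi_i(\vec y,\vec u)^\m\leq\inf\{\psi(x,\vec y,\vec u)^\m:x\in C(\vec u)\}$. On the other hand, given arbitrary $x\in\m$ with $\chi(x,\vec u)^\m=\varepsilon^2$, the first bullet gives $x'\in C(\vec u)$ with $\|x-x'\|_2\leq\varepsilon$; uniform continuity of $\psi$ gives $|\psi(x,\vec y,\vec u)^\m-\psi(x',\vec y,\vec u)^\m|\leq\Delta_\psi^{-1}$-type bound, i.e. a bound $\omega(\varepsilon)$ for the modulus, so $\psi(x,\vec y,\vec u)^\m+h(\varepsilon^2)\geq\psi(x',\vec y,\vec u)^\m-\omega(\varepsilon)+h(\varepsilon^2)\geq\psi(x',\vec y,\vec u)^\m$ provided $h$ is chosen with $h(\varepsilon^2)\geq\omega(\varepsilon)$ for all $\varepsilon$. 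Since $\psi(x',\vec y,\vec u)^\m\geq\inf\{\psi(x'',\vec y,\vec u)^\m:x''\in C(\vec u)\}$, we get the reverse inequality, and the two together give the claimed identity. The key point making this work uniformly is that $h$ depends only on the modulus of uniform continuity of $\psi$, which is part of the syntactic data of the formula and does not depend on $\m$ or on $\vec u$; likewise $\chi$ is a fixed formula. One must check that $h$ can be taken continuous, increasing, and $0$ at $0$, and that $t\mapsto h(t)$ composed with $\chi$ yields a genuine formula — this is fine since continuous functions $[0,1]\to[0,1]$ may be applied to formulas as connectives.

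For part (2), the strategy is the same but the defining condition for membership in $C(\vec u_2)'\cap C(\vec u_1)$ is a conjunction of two conditions: $x\in C(\vec u_1)$, controlled as before by $\chi(X,\vec u_1)$, and $x$ commutes with $C(\vec u_2)$. The second condition needs a ``distance to the relative commutant'' estimate: for $x\in C(\vec u_1)$, one has $d(x,C(\vec u_2)'\cap C(\vec u_1))$ bounded by a function of $\|[x,\text{generators of }C(\vec u_2)]\|_2$ — but since we are quantifying over the model and do not have a finite generating tuple, we instead use the good-unitary inequality for $\vec u_2$ \emph{inside} $C(\vec u_1)$: for $x\in C(\vec u_1)$, $\inf_{\eta\in C(\vec u_2)'\cap C(\vec u_1)}\|x-\eta\|_2^2$ is at most $\chi(x,\vec u_2)^{\m}$ restricted appropriately, because $C(\vec u_2)$ is a $(2,100)$-residual subalgebra of $\m$ and one checks the residual estimate descends to the intermediate factor $C(\vec u_1)$ under the hypothesis $C(\vec u_2)\subseteq C(\vec u_1)$. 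So the penalized formula becomes $\inf_X(\rho(X,\vec Y,\vec U_1,\vec U_2)\dotplus h_1(\chi(X,\vec U_1))\dotplus h_2(\chi(X,\vec U_2)))$ with $h_1,h_2$ chosen analogously from $\Delta_\rho$. The main obstacle I anticipate is precisely this last point: verifying that the $(2,100)$-residual (good-unitary) inequality for $\vec u_2$ in $\m$ implies the corresponding approximation estimate for the relative commutant \emph{within} the smaller factor $C(\vec u_1)$, so that a tuple $x\in C(\vec u_1)$ with small $\chi(x,\vec u_2)^\m$ is genuinely close to $C(\vec u_2)'\cap C(\vec u_1)$ and not merely close to $C(\vec u_2)'\cap\m$. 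Once that geometric fact is in hand, the rest is the same uniform-continuity bookkeeping as in part (1).
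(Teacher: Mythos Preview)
Your treatment of part (1) is essentially the paper's argument: penalize by a continuous nondecreasing function of $\sqrt{\chi(X,\vec U)}$ chosen from the modulus of uniform continuity of $\psi$ in $X$, and verify the two inequalities exactly as you describe. (The paper simply sets $\hat\psi_i:=\inf_X(\psi+\alpha(\sqrt{\chi(X,\vec U)}))$; your $h$ is $\alpha\circ\sqrt{\cdot}$.) The range-reduction and truncated-addition detour is unnecessary but harmless.

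Part (2), however, contains a genuine error. The quantity $\chi(X,\vec U_2)$ controls distance to $C(\vec u_2)$, not to its \emph{commutant}: the zero set of your proposed penalty $h_1(\chi(x,\vec u_1))+h_2(\chi(x,\vec u_2))$ is $C(\vec u_1)\cap C(\vec u_2)=C(\vec u_2)$ (using $C(\vec u_2)\subseteq C(\vec u_1)$), which is typically much larger than $C(\vec u_2)'\cap C(\vec u_1)$. For any $x\in C(\vec u_2)$ outside the relative commutant your penalized expression equals $\rho(x,\ldots)$, so the infimum over all $X$ can lie strictly below the intended value. The ``descent'' issue you flag is therefore not the obstacle; the obstacle is that the good-unitary inequality for $\vec u_2$ says nothing whatsoever about $d(x,C(\vec u_2)')$. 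The paper instead constructs a distance-bounding formula $\zeta(X,\vec U_1,\vec U_2)$ from $\sqrt{\chi(X,\vec U_1)}$ together with $\sqrt{\sup_{y\in C(\vec u_2)}\|[y,X]\|_2}$ (the latter expressible as a formula via part (1)), using the conditional-expectation estimate $\|z-\mathbb{E}_{C(\vec u_2)'\cap C(\vec u_1)}(z)\|_2\leq\sqrt{\sup_{y\in C(\vec u_2)}\|[y,z]\|_2}$ for $z\in C(\vec u_1)$, quoted from \cite{braddisaac}; one then replaces $z=\mathbb{E}_{C(\vec u_1)}(x)$ by $x$ at the cost of further $\chi(x,\vec u_1)$ terms. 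With $\zeta$ in hand, the rest is the part-(1) machine.
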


\begin{proof}
We only prove the infimum statements.  We first prove (1).  Let $\alpha$ be a continuous, nondecreasing function such that $\alpha(0)=0$ and $$|\psi(x,\vec y,\vec u)-\psi(x',\vec y, \vec u)|\leq \alpha(d(x,x'))$$ for all $x,x',\vec y,\vec u$.  We claim that $$\hat{\psi}_i(\vec Y,U_1,U_2):=\inf_X(\psi(X,\vec Y,U_1,U_2)+\alpha(\sqrt{\chi(X,U_1,U_2)}))$$ works.  Fix a II$_1$ factor $\m$, a pair of good unitaries $u_1,u_2\in \m$, and a tuple $\vec y\in \m$.  It is clear that $$\hat{\psi}_i(\vec y,u_1,u_2)^{\m}\leq \inf\{\psi(x,\vec y,u_1,u_2)^{\m}\ : \ x\in C(u_1,u_2)\}.$$  To see the other direction, fix $x,x'\in \m$ and note that
$$\psi(x,\vec y,u_1,u_2)^{\m}\leq \psi(x',\vec y,u_1,u_2)^{\m}+\alpha(d(x,x')),$$ whence, taking the infimum over $x\in C(u_1,u_2)$, we have
$$\inf\{\psi(x,\vec y,u_1,u_2)^{\m}\ : \ x\in C(u_1,u_2)\}\leq \psi(x',\vec y,u_1,u_2)^{\m}+\alpha(\sqrt{\chi(x',u_1,u_2)}^{\m}),$$ whence the desired result follows from taking the infimum over $x'$.

The proof of part (2) proceeds in the same way once we find a formula $\zeta(X,\vec U_1,\vec U_2)$ such that, for any II$_1$ factor $\m$, any two pairs of good unitaries $\vec u_1,\vec u_2\in \m$ such that $C(u_2)\subseteq C(u_1)$, and any $x\in\m$, we have that $d(x,C(u_2)'\cap C(u_1))\leq \zeta(x,\vec u_1,\vec u_2)^{\m}$.  Let $$\mathbb{E}:\m\to C(\vec u_2)'\cap C(\vec u_1), \ \mathbb{E}_1:M\to C(\vec u_1), \text{and }\mathbb{E}_2:C(\vec u_1)\to C(\vec u_2)'\cap C(\vec u_1)$$ be the usual conditional expecations, so $\mathbb{E}=\mathbb{E}_2\circ \mathbb{E}_1$ and $d(x,C(\vec u_2)'\cap C(\vec u_1))=\|x-\mathbb{E}(x)\|_2$.  Note that
$$\|x-\mathbb{E}(x)\|_2\leq \|x-\mathbb{E}_1(x)\|_2+\|\mathbb{E}_1(x)-\mathbb{E}_2(\mathbb{E}_1(x))\|_2.$$  Now $\|x-\mathbb{E}_1(x)\|_2\leq \sqrt{\chi(x,u_{11},u_{12})^{\m}}$.  As proved in \cite[Section 4.2]{braddisaac},
$$\|\mathbb{E}_1(x)-\mathbb{E}_2(\mathbb{E}_1(x))\|_2\leq \sqrt{\sup_{y\in C(\vec u_2)}\|[y,\mathbb{E}_1(x)]\|_2}.$$  Now notice that 
$$\|[y,\mathbb{E}_1(x)]\|_2\leq \|\mathbb{E}_1(x)y-xy\|_2+\|xy-yx\|_2+\|yx-y\mathbb{E}_1(x)\|_2.$$  Let $\psi(X,Y,\vec U_2):=2\chi(X,\vec U_2)+\|XY-YX\|_2$.  It follows that $$\sup_{y\in C(\vec u_2)}\|[y,\mathbb{E}_1(x)]\|_2\leq \hat{\psi}_s(x,\vec u_2)^{\m}.$$  Letting $$\zeta(X,\vec U_1,\vec U_2):=\sqrt{\chi(X,\vec U_1)}+\sqrt{\hat{\psi}_s(X,\vec U_2)}$$ yields the desired formula.  
\end{proof}

Repeatedly applying the Quantification Lemma yields:

\begin{thm}[Relativization Theorem]
For any sentence $\theta$ in prenex normal form, there is a formula $\tilde{\theta}(\vec U_1,\vec U_2)$ such that, for any II$_1$ factor $\m$ and any two pairs of good unitaries $\vec u_1,\vec u_2\in \m$ with $C(\vec u_2)\subseteq C(\vec u_1)$, we have
$$\tilde{\theta}(\vec u_1,\vec u_2)^{\m}=\theta^{C(\vec u_2)'\cap C(\vec u_1)}.$$  Moreover, $\tilde{\theta}$ is also in prenex normal form and has the same number of alternations of quantifiers as $\theta$.
\end{thm}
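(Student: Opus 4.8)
The plan is to prove the Relativization Theorem by induction on the number of quantifiers in $\theta$, using the Quantification Lemma as the inductive engine. Write $\theta$ in prenex normal form as $Q_1 X_1 Q_2 X_2 \cdots Q_r X_r\, \varphi(X_1,\ldots,X_r)$, where each $Q_i$ is either $\sup$ or $\inf$ and $\varphi$ is quantifier-free. The base case $r=0$ is trivial: a quantifier-free sentence is a constant, and $\tilde\theta := \theta$ works with no dependence on $\vec U_1,\vec U_2$ (note a quantifier-free sentence in continuous logic is just a real constant since there are no free variables).

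For the inductive step, the key observation is that relativizing $Q_1 X_1 \cdots Q_r X_r\,\varphi$ to $C(\vec u_2)'\cap C(\vec u_1)$ means taking $Q_1$ over $X_1 \in C(\vec u_2)'\cap C(\vec u_1)$, and \emph{then} relativizing the remaining sentence $Q_2 X_2 \cdots Q_r X_r\,\varphi$ (now with $X_1$ as a free variable) to the same algebra. So I would first treat $\psi(X_1, \vec U_1, \vec U_2) := Q_2 X_2 \cdots Q_r X_r\, \varphi(X_1, X_2, \ldots, X_r)$ as a formula with free variable $X_1$ and parameters $\vec U_1, \vec U_2$, apply the inductive hypothesis to the sentence obtained by fixing $X_1$ (more precisely, carry the induction on formulas with one free variable rather than sentences — this is the cleanest bookkeeping), obtaining a formula $\tilde\psi(X_1, \vec U_1, \vec U_2)$ with $\tilde\psi(x, \vec u_1, \vec u_2)^{\m} = \psi(x,\vec u_1,\vec u_2)^{C(\vec u_2)'\cap C(\vec u_1)}$ whenever $x \in C(\vec u_2)'\cap C(\vec u_1)$. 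Then apply part (2) of the Quantification Lemma to $\tilde\psi$ with the appropriate choice of $\overline{(\cdot)}_s$ or $\overline{(\cdot)}_i$ according to whether $Q_1 = \sup$ or $Q_1 = \inf$, producing $\tilde\theta(\vec U_1, \vec U_2) := \overline{\tilde\psi}_{s/i}(\vec U_1, \vec U_2)$. Since $\overline{\tilde\psi}_{s/i}(\vec u_1,\vec u_2)^{\m}$ equals the sup/inf of $\tilde\psi(x,\vec u_1,\vec u_2)^{\m}$ over $x \in C(\vec u_2)'\cap C(\vec u_1)$, and on that set $\tilde\psi$ computes the relativized value of $\psi$, we get exactly $\theta^{C(\vec u_2)'\cap C(\vec u_1)}$.

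The main subtlety — and the step I would be most careful about — is the induction hypothesis itself: the Quantification Lemma is stated for formulas, so to make the induction go through I should state and prove a \emph{formula} version of the Relativization Theorem (for formulas $\theta(X_1,\ldots,X_j, \vec U_1, \vec U_2)$ in prenex normal form with arbitrarily many free $X$-variables), and the theorem as stated is the case $j = 0$. Concretely, when I strip off $Q_1 X_1$ and recurse, the remaining object $Q_2 X_2 \cdots Q_r X_r\,\varphi$ genuinely has $X_1$ free, so the formula version is needed, not merely the sentence version. I also need to check that the hypothesis ``$C(\vec u_2) \subseteq C(\vec u_1)$'' is exactly what is required to invoke part (2) of the Quantification Lemma at each stage, which it is, and that this hypothesis is not consumed by the recursion (the algebra $C(\vec u_2)'\cap C(\vec u_1)$ never changes along the induction, and the unitaries $\vec u_1, \vec u_2$ are fixed throughout). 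Finally, the ``moreover'' clause about prenex form and alternation count is immediate from the construction: each inductive step prepends a single quantifier $Q_1$ in front of the prenex formula $\tilde\psi$ (which, since we only need $\overline{\tilde\psi}_{s/i}$ to be a single $\inf_X$ or $\sup_X$ of a quantifier-free-modulus expression composed with $\tilde\psi$ — inspecting the proof of the Quantification Lemma, the operators $\alpha$ and the $\chi$, $\zeta$ terms add no quantifier alternations beyond one outermost $\inf_X$, but here one must be slightly careful that the auxiliary $\hat\psi_s$ appearing inside $\zeta$ in part (2) does not introduce an extra alternation of the wrong type — in fact it introduces a $\sup_Y$, so one should track that the net effect still matches the alternation count of $\theta$). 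Assuming the Quantification Lemma's formulas are arranged so that $\overline\rho_i$ has the form $\inf_X(\cdots)$ with the $(\cdots)$ being $\rho$ precomposed with a modulus plus the distance-bounding term $\zeta$, and noting $\zeta$ contains a $\sup$ that can be absorbed, the alternation count is preserved; I would verify this case split ($Q_1 = \inf$ vs $Q_1 = \sup$) explicitly to close the argument.
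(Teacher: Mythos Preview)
Your approach is correct and is exactly what the paper does: the paper's entire proof is the single line ``Repeatedly applying the Quantification Lemma yields,'' and your induction on the quantifier prefix (strengthened to formulas with free variables, then using part~(2) of the Quantification Lemma at each step) is precisely that repeated application spelled out. Your caution about carrying free variables through the recursion and about tracking alternations---in particular that the auxiliary $\sup_Y$ hidden in $\zeta$ via $\hat\psi_s$ can be absorbed into the adjacent quantifier block provided one processes an entire alternation block at once---is the right bookkeeping and is left implicit in the paper.
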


We now introduce the formulae 

$$\varphi_{\good}(U_1,U_2):=\sup_X\inf_Y \max(\max_{i=1,2}\|[Y,U_i]\|_2,d(X,Y)\dotminus \sqrt{\chi(X,U_1,U_2))})$$ and

$$\varphi^n_{\leq}(\vec Y;\vec U):=\sup_{X\in C(\vec U)}\max_{i=1,\ldots,n} \|[X,Y_i]\|_2.$$

In the definition of $\varphi_{\leq}$, we are abusing notation and really mean the formula one obtains from Lemma \ref{quant}.  In what follows, we will only need to consider $\varphi^3_{\leq}$ and denote this formula simply by $\varphi_{\leq}$.

Note that:
\begin{itemize}
\item If $\m$ is an $\aleph_1$-saturated II$_1$ factor, then $\varphi_{\good}(u_1,u_2)^{\m}=0$ if and only if $u_1,u_2$ is a pair of good unitaries.
\item If $\m$ is any II$_1$ factor, $\vec u\in \m$ is a pair of good unitaries, and $\vec y\in \m^n$ is arbitrary, then $\varphi_{\leq}(\vec y,\vec u)^{\m}=0$ if and only if $\vec y\leq \vec u$.
\end{itemize}

\begin{df}
Given a sentence $\theta$, we recursively define a sequence of sentences $\theta_n$ as follows:  Set $\theta_1:=\theta$.  Supposing that $\theta_n$ has been defined, we set $\theta_{n+1}$ to be the sentence
$$\inf_{\vec U_1}\max(\varphi_{\good}(\vec U_1),\sup_{A}\inf_{\vec U_2}\max(\varphi_{\good}(\vec U_2),\varphi_{\leq}(A,\vec U_1;\vec U_2),\tilde{\theta_n}(\vec U_1,\vec U_2))).$$
\end{df}

When $\theta=\tau_m$, we write $\theta_{m,n}$ for $(\tau_m)_n$.  Here is the main result of this paper:

\begin{thm}
For each nonamenable group $\Gamma$, there is a sequence $(r_n(\Gamma))$ of positive real numbers such that, for any $n,t\in \n$ with $t\geq 1$ and any $\balpha\in 2^n$, we have:
\[
\begin{array}{lr}
\theta_{m,n}^{L(T_\alpha(\Gamma))^{\otimes t}}=0 \text{ for all }m\geq 1 & \text{ if }\balpha(n-1)=1;\\
\theta_{m(\Gamma),n}^{L(T_\alpha(\Gamma))^{\otimes t}}\geq r_n(\Gamma) &\text{ if }\balpha(n-1)=0.
\end{array}
\]
\end{thm}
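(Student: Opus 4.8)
The plan is to prove the two bulleted claims simultaneously by induction on $n$, extracting at each stage the constant $r_n(\Gamma)$ from the previous stage.

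\medskip

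\noindent\textbf{Base case $n=1$.} Here $\theta_{m,1}=\tau_m$ and $\balpha\in 2^1$, so $\balpha(0)=\alpha$. If $\alpha=1$, Proposition~\ref{true} gives $\tau_m^{L(T_1(\Gamma))^{\otimes t}}=0$ for all $m\geq 1$; note that $L(\bigoplus_t T_1(\Gamma))=L(T_1(\Gamma))^{\otimes t}$, so this is exactly the first line. If $\alpha=0$, apply Proposition~\ref{false} with $M=L(T_0(\Gamma))$ and $Q=\bC^{t-1}$ (or rather with the ambient algebra $L(T_0(\Gamma))^{\otimes t}=L(T_0(\Gamma))\otimes Q$ for $Q=L(T_0(\Gamma))^{\otimes(t-1)}$, so that the intermediate-algebra hypothesis is trivially met with $M$ the whole algebra): this gives $\tau_{m(\Gamma)}^{L(T_0(\Gamma))^{\otimes t}}\geq\delta$, and we set $r_1(\Gamma):=\delta$ as in the statement of Proposition~\ref{false}. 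Note $r_1(\Gamma)$ does not depend on $t$.

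\medskip

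\noindent\textbf{Inductive step, the ``$=0$'' case.} Suppose $\balpha\in 2^{n+1}$ with $\balpha(n)=1$. We must show $\theta_{m,n+1}^{L(T_{\alpha_0}(\Gamma))^{\otimes t}}=0$. Write $\m:=L(T_{\alpha_0}(\Gamma))^{\otimes t}$. Unwinding the definition of $\theta_{n+1}$, we must produce, in a suitable ultrapower (to use saturation), a pair of good unitaries $\vec u_1$ with $\varphi_{\good}(\vec u_1)=0$ such that for every $A$ there is a pair of good unitaries $\vec u_2$ with $\vec u_2\leq\vec u_1$, $\vec y:=(A,\vec u_1)\leq\vec u_2$, and $\tilde{\theta_n}(\vec u_1,\vec u_2)=0$, i.e.\ by the Relativization Theorem $\theta_n^{C(\vec u_2)'\cap C(\vec u_1)}=0$. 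The strategy mirrors \cite[Section 4.2]{braddisaac}: pass to an ultrapower $\m^\u$, which by Facts~\ref{key.fact} is a generalized McDuff ultraproduct corresponding to $\balpha|1=(\alpha_0)$ (here $\alpha_0$ is the \emph{first} digit, not the relevant one); choose $\vec u_1$ via Facts~\ref{key.fact}(2) so that $C(\vec u_1)=\prod_\u P_{\balpha,m_s}^{\otimes t_s}$ for a rapidly growing sequence $(m_s)$; given separable $A$, use Facts~\ref{key.fact}(3) to find $(n_s)$ with $n_s<m_s$ and $\prod_\u P_{\balpha,n_s}^{\otimes t_s}$ inside (the relative commutant appearing in) the data, pick $\vec u_2$ with $C(\vec u_2)=\prod_\u P_{\balpha,n_s}^{\otimes t_s}$; then by Facts~\ref{key.fact}(1), $C(\vec u_2)'\cap C(\vec u_1)$ is a generalized McDuff ultraproduct corresponding to $\balpha^\#$. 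Wait --- more care is needed about which digit is being peeled: the recursion on $\theta$ peels \emph{one} level, so an immediate induction requires that $C(\vec u_2)'\cap C(\vec u_1)$ be a generalized McDuff ultraproduct corresponding to a sequence whose last digit is $\balpha(n)$; one arranges the bookkeeping so that after $n$ applications of the peeling inside $\theta_{m,n+1}$ one is looking at $L(T_{\balpha(n)}(\cdot))^{\otimes(\text{something})}$-type algebras, at which point $\theta_1=\tau_m$ evaluates to $0$ by Proposition~\ref{true}. Since a sentence takes the same value on a II$_1$ factor and on any of its ultrapowers, $\theta_{m,n+1}^{\m}=\theta_{m,n+1}^{\m^\u}=0$.

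\medskip

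\noindent\textbf{Inductive step, the ``$\geq r_{n+1}$'' case, and the main obstacle.} Suppose $\balpha\in 2^{n+1}$ with $\balpha(n)=0$; we seek $r_{n+1}(\Gamma)>0$, independent of $t$, with $\theta_{m(\Gamma),n+1}^{L(T_{\alpha_0}(\Gamma))^{\otimes t}}\geq r_{n+1}(\Gamma)$. This is the heart of the argument and the main obstacle. Fix any pair of good unitaries $\vec u_1$ in $\m:=L(T_{\alpha_0}(\Gamma))^{\otimes t}$ with $\varphi_{\good}(\vec u_1)^\m$ small; by Facts~\ref{key.fact}(3) applied in the ultrapower together with a reflection/saturation argument, $C(\vec u_1)$ approximately contains some $P_{\balpha,N}$-type subalgebra, and we choose the witness $A$ in the $\sup_A$ to be generators of $C(\vec u_1)$ itself (or of a large relative commutant forcing $C(\vec u_2)$ to sit deep), so that any competitor $\vec u_2$ with $\vec u_2\leq\vec u_1$, $(A,\vec u_1)\leq\vec u_2$ and $\varphi_{\good}(\vec u_2)$ small has $C(\vec u_2)'\cap C(\vec u_1)$ squeezed between two ``consecutive'' generalized McDuff ultraproducts corresponding to $\balpha^\#$ --- precisely the intermediate-algebra situation $L(T_0(\Delta))\subseteq M\subseteq L(T_0(\Delta))\otimes Q$ of Proposition~\ref{false} (after the first $n-1$ peelings reduce us to level-one data with last digit $\balpha(n)=0$, the relevant group $\Delta$ being the iterated $T$-construction on $\Gamma$, which remains nonamenable, and $Q$ absorbing the rest of the ultraproduct). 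Then Proposition~\ref{false} (iterated, or rather the inductive hypothesis $\tau_{m(\Gamma)}^{(\cdot)}\geq r_n(\cdot)$ applied with the appropriate group) forces $\tilde{\theta_n}(\vec u_1,\vec u_2)^\m=\theta_n^{C(\vec u_2)'\cap C(\vec u_1)}\geq r_n(\Delta)$, where $r_n(\Delta)$ is bounded below by a quantity depending only on $\Gamma$ and $n$ (since $\Delta=(T_{\alpha_1}\circ\cdots\circ T_{\alpha_{n-1}})(\Gamma)$ and the constants $m(\cdot),C(\cdot)$ in Fact~\ref{bci4.6} can be taken uniform along the tower, or at worst tracked explicitly). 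Hence the $\max$ inside $\theta_{m(\Gamma),n+1}$ is at least $\min(\text{good-unitary threshold},\ r_n(\Delta))$, and taking $\inf_{\vec U_1}$ and then the infimum over admissible data yields a positive lower bound $r_{n+1}(\Gamma)$. The delicate points --- the real work --- are: (a) making the reflection from the ultrapower back to $\m$ quantitative, so that ``$\varphi_{\good}(\vec u_1)$ small'' genuinely forces $C(\vec u_1)$ to contain a $P$-type subalgebra up to a controlled $\|\cdot\|_2$-error, which is where the stability lemma and the modulus $\upsilon$ re-enter; (b) verifying that the choice of $A$ pins $C(\vec u_2)$ to the correct ``depth'' so that the relative commutant is neither too big nor too small; and (c) tracking the constants through the tower $T_{\alpha_1}\circ\cdots\circ T_{\alpha_{n-1}}$ so that $r_{n+1}(\Gamma)$ depends only on $\Gamma$ and $n$, and in particular not on $t$. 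Once (a)--(c) are handled, both bulleted conclusions follow, with the exponent $5k+3$ on quantifier alternations coming from counting alternations in $\theta_{m,n}$: $\tau_m$ has $3$, and each application of the recursion adds $5$ (two from the $\inf\sup\inf$ prefix plus the alternations in $\varphi_{\good},\varphi_{\leq}$, with $\tilde{\theta_n}$ contributing the same as $\theta_n$ by the Relativization Theorem).
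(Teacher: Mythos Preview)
Your base case and the ``$=0$'' half of the inductive step are essentially what the paper does (pass to an ultrapower, use Facts~\ref{key.fact} to realize $C(\vec u_2)'\cap C(\vec u_1)$ as a generalized McDuff ultraproduct for $\balpha^{\#}$, apply induction and \L o\'s). Your hesitation about ``which digit is being peeled'' is unnecessary: Facts~\ref{key.fact}(1) already says the relative commutant is a generalized McDuff ultraproduct for $\balpha^{\#}$ and $\Gamma$, and the inductive hypothesis applies to each factor $L(T_{\balpha^{\#}}(\Gamma))^{\otimes t_s}$ of that ultraproduct, so \L o\'s gives $\theta_{m,n}=0$ directly---no bookkeeping about ``last digits'' or changing the base group is needed.

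The ``$\geq r_{n+1}$'' case is where your proposal diverges from the paper, and where it has a genuine gap. You attempt a direct quantitative argument: start from approximately good $\vec u_1$ in the finite algebra, argue that $C(\vec u_1)$ must approximately contain a $P$-type subalgebra, choose $A$ to pin down $C(\vec u_2)$, and then push constants through. But the assertion that an \emph{arbitrary} pair of (approximately) good unitaries has $C(\vec u_1)$ close to some $\prod P_{\balpha,N}^{\otimes t}$ is not justified---Facts~\ref{key.fact}(2) only produces \emph{some} good unitaries with this property, it says nothing about all of them. Your points (a)--(c) are not minor bookkeeping; they are exactly the obstruction, and you have not indicated how to overcome them. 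In particular your detour through $r_n(\Delta)$ for $\Delta=(T_{\alpha_1}\circ\cdots\circ T_{\alpha_{n-1}})(\Gamma)$ is a symptom: the inductive hypothesis is about $r_n(\Gamma)$, uniform over $\balpha\in 2^n$, and should be invoked as such.

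The paper sidesteps all of this with a compactness argument. Suppose no $r_{n+1}(\Gamma)$ works; pick $\balpha_l\in 2^{n+1}$ (with $\balpha_l(n)=0$) and $t_l$ with $\theta_{m(\Gamma),n+1}^{L(T_{\balpha_l}(\Gamma))^{\otimes t_l}}<1/l$. Since $2^{n+1}$ is finite, pass to a single $\balpha$, form the generalized McDuff ultraproduct $\m=\prod_\u L(T_{\balpha}(\Gamma))^{\otimes t_l}$, and get $\theta_{m(\Gamma),n+1}^{\m}=0$. Now $\m$ is $\aleph_1$-saturated, so the outer $\inf$ is witnessed by an \emph{exact} pair of good unitaries $\vec u_1$; for the $\sup_A$ choose $a$ freely, and the inner $\inf$ is witnessed by an exact good pair $\vec u_2$ with $C(\vec u_2)\subseteq C(\vec u_1)$. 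Then $C(\vec u_2)'\cap C(\vec u_1)$ is a generalized McDuff ultraproduct for $\balpha^{\#}$ and $\Gamma$, so by the inductive hypothesis (applied factorwise and \L o\'s) $\tilde\theta_{m(\Gamma),n}(\vec u_1,\vec u_2)^{\m}=\theta_{m(\Gamma),n}^{C(\vec u_2)'\cap C(\vec u_1)}\geq r_n(\Gamma)>0$, contradicting $\tilde\theta_{m(\Gamma),n}(\vec u_1,\vec u_2)^{\m}=0$. No quantitative reflection, no tracking of $\upsilon$, no change of base group.
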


\begin{proof}
We prove the theorem by induction on $n$.  When $n=1$, the theorem holds by Propositions \ref{true} and \ref{false}.

%Inductively suppose that $\theta_n$ is a sentence such that, for each non-amenable group $\Gamma$, there is $r_n(\Gamma)\in \r^{>0}$ such that, for any $\alpha\in 2^n$, and any $t\in\n$, we have:
%\begin{itemize}
%\item $\theta_n^{L(T_\alpha(\Gamma))^{\otimes t}}=0$ if $\alpha(n-1)=1$;
%\item $\theta_n^{L(T_\alpha(\Gamma))^{\otimes t}}\geq r_n(\Gamma)$ if $\alpha(n-1)=0$.
%\end{itemize}

%Further assume that each $\theta_n$ is in prenex normal form, has $5(n-1)+3$ quantifier alternations and begins with an infimum.  (Henry constructed such a sentence for $n=1$, which is $\inf_{V_a,V_b}\psi_{m(\Gamma)}$.)  We now let $\theta_{n+1}$ be the following sentence:
%$$\inf_{\vec U_1}\max(\varphi_{\good}(\vec U_1),\sup_{A}\inf_{\vec U_2}\max(\varphi_{\good}(\vec U_2),\varphi_{\leq}(A,\vec U_1;\vec U_2),\tilde{\theta_n}(\vec U_1,\vec U_2))).$$
Inductively suppose that the theorem is true for $n$.  Fix a non-amenable group $\Gamma$.  First suppose that $\balpha\in 2^{n+1}$ is such that $\balpha(n)=1$.  Fix also $m,t\geq 1$.  Let $\m$ be the ultrapower of $L(T_{\balpha}(\Gamma))^{\otimes t}$; by \L os' theorem, it suffices to show that $\theta_{m,n+1}^{\m}=0$.  Fix a pair of good unitaries $\vec u_1>1$.  Given $a\in \m$, we can find a pair of good unitaries $\vec u_2\in \m$ such that $\vec u_2>\{a,\vec u_1\}$.  We then have that $C(\vec u_2)'\cap C(\vec u_1)$ is a generalized McDuff ultraproduct corresponding to $\balpha^{\#}$ and $\Gamma$, whence, by the inductive hypothesis, we have that $\tilde{\theta}_{m,n}(\vec u_1,\vec u_2)^{\m}=\theta^{C(\vec u_2)'\cap C(\vec u_1)}=0$.  It follows that $\theta_{m,n+1}^{\m}=0$.

Now suppose, towards a contradiction, that there is no constant $r_{n+1}(\Gamma)$.  Then for each $l>1$, there is $\balpha_l\in 2^{n+1}$ and $t_l\in \n$ with $t_l\geq 1$ such that $\theta_{m(\Gamma),n+1}^{L(T_{\balpha_l}(\Gamma))^{\otimes t_l}}<\frac{1}{l}$.  Without loss of generality, each $\balpha_l=\balpha$ for some fixed $\balpha\in 2^{n+1}$.  Let $\m:=\prod_\u L(T_{\balpha}(\Gamma))^{\otimes t_l}$, a generalized McDuff ultraproduct corresponding to $\balpha$ and $\Gamma$.  We then have that $\theta_{m(\Gamma),n+1}^{\m}=0$.  Let $\vec u_1$ be a pair of good unitaries witnessing the infimum.  Take any $a>\vec{u_1}$ and then take a pair of good unitaries $\vec u_2>a$ witnessing the infimum for that $a$.  We then have that $C(\vec u_2)'\cap C(\vec u_1)$ is a McDuff ultraproduct corresponding to $\balpha^{\#}$ and $\Gamma$, whence $\tilde{\theta}_{m(\Gamma),n}(\vec u_1,\vec u_2)^{\m}=\theta_n^{C(\vec u_2)'\cap C(\vec u_1)}\geq r_n(\Gamma)$, contradicting the fact that $\tilde{\theta}_{m(\Gamma),n}(\vec u_1,\vec u_2)^{\m}=0$.
\end{proof}

\begin{nrmk}
Note that each $\tau_m$ is equivalent to a formula in prenex normal form that begins with an $\inf$ and has three alternations of quantifiers.  By the construction, it is easy to check, by induction on $n$, that each $\theta_{m,n}$ is equivalent to a formula in prenex normal form that begins with an $\inf$ and has $5n+3$ alternations of quantifiers.  This agrees with the theoretical bounds given in \cite{braddisaac}.
\end{nrmk}

%Note that inductively assuming that $\theta_n$ starts with an infimum, the above sentence is logically equivalent to a sentence that has five more quantifier alternations, so that we may replace $\theta_{n+1}$ with a logically equivalent formula in prenex normal form with $5n+3$ quantifier alternations that begins with an infimum.

%For $\balpha\in 2^\omega$, set $\m_{\balpha}:=L(T_{\balpha}(\mathbb{F}_2))$.  We then have:

\begin{cor}
Suppose that $\Gamma$ is any countable group and $\balpha,\bbeta\in 2^\omega$ are such that $\balpha|n-1=\bbeta|n-1$, $\balpha(n)=1$, and $\bbeta(n)=0$.  Write $\bbeta=(\bbeta|n+1) \concat \bbeta^*$. Set $m:=m(T_{\bbeta^*}(\Gamma))$ and $r:=r_{n+1}(T_{{\bbeta}^*}(\Gamma))$.  Then $\theta_{m,n+1}^{\m_{\balpha}(\Gamma)}=0$ and $\theta_{m,n+1}^{\m_{\bbeta}(\Gamma)}\geq r$.
\end{cor}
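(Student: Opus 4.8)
The plan is to read the Corollary off the main Theorem of the paper after ``peeling off'' the first $n+1$ digits of $\balpha$ and $\bbeta$. Writing $\balpha=(\balpha|(n+1))\concat\balpha^*$ and $\bbeta=(\bbeta|(n+1))\concat\bbeta^*$ with $\balpha^*,\bbeta^*\in 2^\omega$, and recalling that $T_\gamma=T_{\gamma_0}\circ T_{\gamma_1}\circ\cdots$ for a sequence $\gamma$, the first step is to establish the structural identities
$$T_{\bbeta}(\Gamma)=T_{\bbeta|(n+1)}\big(T_{\bbeta^*}(\Gamma)\big),\qquad T_{\balpha}(\Gamma)=T_{\balpha|(n+1)}\big(T_{\balpha^*}(\Gamma)\big),$$
so that $\m_{\bbeta}(\Gamma)=L\big(T_{\bbeta|(n+1)}(T_{\bbeta^*}(\Gamma))\big)$ and $\m_{\balpha}(\Gamma)=L\big(T_{\balpha|(n+1)}(T_{\balpha^*}(\Gamma))\big)$. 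I would then set $\Gamma_1:=T_{\balpha^*}(\Gamma)$ and $\Gamma_2:=T_{\bbeta^*}(\Gamma)$ and verify that these are countable \emph{non-amenable} groups, so that $m:=m(\Gamma_2)$ and $r:=r_{n+1}(\Gamma_2)$ as in the statement are defined. Non-amenability is easy: $\Gamma_i$ contains a subgroup of the form $T_0(\Delta)$ or $T_1(\Delta)$ (take $\Delta$ to be $T_\gamma(\Gamma)$ for $\gamma$ the sequence obtained by deleting the first digit of $\balpha^*$, resp.\ $\bbeta^*$), and for any group $\Delta$ and any $\alpha\in\{0,1\}$ the group $T_\alpha(\Delta)$ retracts onto $\langle\Lambda_1,\Lambda_2\rangle\cong\mathbb{F}_2$, since sending every $\Gamma_i$ and every $\Lambda_j$ with $j\geq 3$ to $1$ while fixing $\Lambda_1,\Lambda_2$ respects the defining relations $[\Gamma_i,\Lambda_j]=0$.

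With this in hand, the Corollary becomes a direct instance of the main Theorem. For $\m_{\bbeta}(\Gamma)$, apply that Theorem with ``$n$'' replaced by $n+1$, with ``$t$'' $=1$, with base group $\Gamma_2$, and with ``$m$'' $=m(\Gamma_2)=m$: since the last digit of $\bbeta|(n+1)$ is $\bbeta(n)=0$, the Theorem gives $\theta_{m,n+1}^{L(T_{\bbeta|(n+1)}(\Gamma_2))}\geq r_{n+1}(\Gamma_2)=r$, i.e.\ $\theta_{m,n+1}^{\m_{\bbeta}(\Gamma)}\geq r$. For $\m_{\balpha}(\Gamma)$, apply the Theorem with ``$n$'' $=n+1$, ``$t$'' $=1$, and base group $\Gamma_1$: since the last digit of $\balpha|(n+1)$ is $\balpha(n)=1$, it gives $\theta_{m',n+1}^{L(T_{\balpha|(n+1)}(\Gamma_1))}=0$ for \emph{every} $m'\geq 1$, in particular for $m'=m$, so $\theta_{m,n+1}^{\m_{\balpha}(\Gamma)}=0$. (Only $\balpha(n)=1$ and $\bbeta(n)=0$ enter the argument; the hypothesis $\balpha|(n-1)=\bbeta|(n-1)$ is not needed for the conclusion, and only records the sense in which these two examples are distinguished at ``level $n+1$''.)

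The one step carrying genuine content is the peeling identity, and I expect it to be the main --- though still mild --- obstacle. It reduces to the fact that $T_0$ and $T_1$ commute with directed unions: a word in $T_\alpha(\Delta)$ involves only finitely many elements of $\Delta$, so if $\Delta=\bigcup_j\Delta_j$ is a directed union of subgroups then the word already lies in $T_\alpha(\Delta_j)$ for large $j$; together with the injectivity of $T_\alpha(\Delta_j)\hookrightarrow T_\alpha(\Delta_{j+1})$ noted in Section 2, this yields $T_\alpha\big(\varinjlim_j\Delta_j\big)=\varinjlim_j T_\alpha(\Delta_j)$. Iterating over the $n+1$ atomic steps comprising $T_{\bbeta|(n+1)}$, and using $T_{\bbeta}(\Gamma)=\varinjlim_k T_{\bbeta|k}(\Gamma)$ with the factorization $T_{\bbeta|k}(\Gamma)=T_{\bbeta|(n+1)}\big(T_{\bbeta^*|(k-n-1)}(\Gamma)\big)$ valid for $k>n+1$, then produces the displayed identities. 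Everything else is bookkeeping of indices and a straight quotation of the main Theorem, so I do not foresee any serious difficulty.
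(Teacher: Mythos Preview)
Your approach is correct and is exactly the one the paper intends: the Corollary is stated without proof, but the very choice of constants $m=m(T_{\bbeta^*}(\Gamma))$ and $r=r_{n+1}(T_{\bbeta^*}(\Gamma))$ signals that one is to apply the main Theorem with base group $T_{\bbeta^*}(\Gamma)$ (respectively $T_{\balpha^*}(\Gamma)$) and with the finite sequence $\bbeta|(n+1)$ (respectively $\balpha|(n+1)$), after observing the peeling identity $K_{\balpha}(\Gamma)=K_{\balpha|(n+1)}\big(K_{\balpha^*}(\Gamma)\big)$. Your verification that $T_{\bbeta^*}(\Gamma)$ is nonamenable via the retraction of $T_\alpha(\Delta)$ onto $\langle\Lambda_1,\Lambda_2\rangle\cong\mathbb{F}_2$, and your remark that the hypothesis $\balpha|(n-1)=\bbeta|(n-1)$ plays no role in the argument, are both apt additions that the paper leaves implicit.
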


%\begin{nrmk}
%In the previous corollary, $\alpha$ and $\beta$ are assumed to be infinite sequences so that, writing $\balpha=(\balpha|n+1)\concat \balpha^*$ and $\bbeta=(\bbeta|n+1)\concat \bbeta^*$, we have that $\balpha^*,\bbeta^*\not=\emptyset$, so that $T_{\balpha^*}(\Gamma)$ and $T_{\bbeta^*}(\Gamma)$ are nonamenable.  As discussed in \cite{BCI}, if one starts with a non-amenable group such as $\mathbb{F}_2$, then one can relax the assumption to $\balpha,\bbeta\in 2^{\leq \omega}$. 
%\end{nrmk}

\begin{nrmk}
As pointed out in \cite{BCI}, the results there also show, for any countable group $\Gamma$ and any distinct $\balpha,\bbeta\in 2^{\omega}$, that $\mathrm{C}^*_r(T_{\balpha}(\Gamma))$ and $\mathrm{C}^*_r(T_{\bbeta}(\Gamma))$ are not elementarily equivalent.  Our results here do indeed yield concrete sentences distinguishing these algebras.   As mentioned in \cite{BCI}, the groups $T_{\balpha}(\Gamma)$ are increasing unions of \emph{Powers groups}, whence, by the proof of \cite[Proposition 7.2.3]{munster}, the unique trace on $\mathrm{C}^*_r(\Gamma)$ is \emph{definable}, and uniformly so over all $\balpha\in 2^\omega$.  Consequently, the $\theta_{m,n}$'s can be construed as formulae in the language of \cstar-algebras with imaginary sorts added and, since the completion of $\mathrm{C}^*_r(T_{\balpha}(\Gamma))$ with respect to the GNS representation induced by the unique trace is $\mathcal{M}_{\balpha}(\Gamma)$, we have that the $\theta_{m,n}$'s distinguish the $\mathrm{C}^*_r(T_{\balpha}(\Gamma))$'s as well.

\cite{BCI} also show that $\mathrm{C}^*_r(T_{\balpha}(\Gamma))\otimes \mathcal{Z}$ and $\mathrm{C}^*_r(T_{\bbeta}(\Gamma))\otimes \mathcal{Z}$ are also not elementarily equivalent for distinct $\balpha,\bbeta\in 2^\omega$, where $\mathcal{Z}$ is the Jiang-Su algebra.  Since the unique trace in a monotracial exact $\mathcal{Z}$-stable algebra is definable (and uniformly so) by \cite[Section 3.5]{munster} and the closure of $\mathrm{C}^*_r(T_{\balpha}(\Gamma))\otimes \mathcal{Z}$ in its GNS representation with respect to the unique trace is also $\mathcal{M}(T_{\balpha}(\Gamma))$, we also have concrete sentences distinguishing the $\mathrm{C}^*_r(T_{\balpha}(\Gamma))\otimes \mathcal{Z}$'s \emph{in the case that $\Gamma$ is exact} (e.g. when $\Gamma=\mathbb{F}_2)$.  It would be interesting to know if the unique trace on $\mathrm{C}^*_r(T_{\balpha}(\Gamma))\otimes \mathcal{Z}$ is definable in general, that is, for an arbitrary countable group $\Gamma$.  
\end{nrmk}

\end{document}